\newtheorem{assumption} {Assumption}
\newtheorem{theorem} {Theorem}
\newtheorem{lemma} {Lemma}
\newtheorem{definition} {Definition}
\newtheorem{observation} {Observation}
\def\x{{\mathbf{x}}}
\def\e{{\mathbf{e}}}
\def\u{{\mathbf{u}}}
\def\v{{\mathbf{v}}}
\def\z{{\mathbf{z}}}
\def\w{{\mathbf{w}}}
\def\y{{\mathbf{y}}}
\def\p{{\mathbf{p}}}
\def\b{{\mathbf{b}}}
\def\X{{\mathbf{X}}}
\def\A{{\mathbf{A}}}
\def\M{{\mathbf{M}}}
\def\I{{\mathbf{I}}}
\def\U{{\mathbf{U}}}
\def\FO{{\mF^*}}
\def\DFO{D_{\FO}}
\def\DO{{D^*}}
\def\DK{{D_{\mK}}}
\def\xo{{\x^*}}
\def\tnabla{{\tilde{\nabla}}}
\def\tnbt{{\tnabla_t}}
\def\hbar{{\bar{h}}}
\def\dbar{{\bar{d}}}
\def\Mbar{{\bar{M}}}
\def\tilO{{\tilde{O}}}
\def\teta{{\tilde{\eta}}}
\def\ha{{\hat{a}}}
\newcommand{\mP}{\mathcal{P}}
\newcommand{\mV}{\mathcal{V}}
\newcommand{\mF}{\mathcal{F}}
\newcommand{\mK}{\mathcal{K}}
\newcommand{\mX}{\mathcal{X}}
\newcommand{\mS}{\mathcal{S}}
\newcommand{\mbS}{\mathbb{S}}
\newcommand{\E}{\mathbb{E}}
\newcommand{\conv}{\textrm{conv}}
\newcommand{\dist}{\textrm{dist}}
\newcommand{\trace}{\textrm{Tr}}
\newcommand{\reals}{\mathbb{R}}
\DeclareMathOperator*{\argmin}{arg\,min}
\begin{document}
	\date{}
	\author{Dan Garber \\ {\small Technion - Israel Institute of Technology}\\ {\small \texttt{dangar@technion.ac.il}}
		\and
		Noam Wolf\\ {\small Technion - Israel Institute of Technology}\\ {\small \texttt{wolfnoam@campus.technion.ac.il}}}
	\title{Frank-Wolfe with a Nearest Extreme Point Oracle}
	\maketitle
	
	\begin{abstract}%
		We consider variants of the classical Frank-Wolfe algorithm for constrained smooth convex minimization, that instead of access to the standard  oracle for minimizing a linear function over the feasible set, have access to an oracle that can find an extreme point of the feasible set that is closest in Euclidean distance to a given vector. We first show that for many feasible sets of interest, such an oracle can be implemented with the same complexity as the standard linear optimization oracle. We then show that with such an oracle we can design new Frank-Wolfe variants which enjoy significantly improved complexity bounds in case the set of optimal solutions lies in the convex hull of a subset of extreme points with small diameter (e.g., a low-dimensional face of a polytope). In particular, for many $0\text{--}1$ polytopes, under quadratic growth and strict complementarity conditions, we obtain the first linearly convergent variant with rate that depends only on the dimension of the optimal face and not on the ambient dimension.
	\end{abstract}

	\section{Introduction}
	
	The Frank-Wolfe (FW) algorithm (aka the conditional gradient method) is a classical first-order method for minimzing a smooth and convex function $f(\cdot)$ over a convex and compact feasible set $\mK$ \cite{frank1956algorithm,Polyak,Jaggi13b}, where in this work we assume for simplicity that the underlying space is $\reals^d$ (though our results are applicable to any Euclidean vector space). This algorithm has regained significant interest within the machine learning and optimization communities in recent years due to the fact that, aside of access to a first-order oracle of the objective function, it only requires on each iteration to minimize a linear function over the feasible set which, in many cases of interest, is much more efficient than computing projections, as required by projected/proximal gradient methods. Another benefit of the method is that when the number of iterations is not too high, it produces iterates that are given as an explicit sparse convex combination of extreme points of the feasible set \cite{Jaggi13b}.
	
	The well-known convergence rate of the method is $O(\beta{}D_{\mK}^2/t)$, where $\beta$ is the smoothness parameter of the objective, $D_{\mK}$ is the Euclidean diameter of the set, and $t$ is the iteration counter. It is well-known that this rate is not improvable even if the objective function is strongly convex  (see for instance \cite{lan2013complexity}), a property that is well known to allow for faster convergence rates, and in particular linear rates, for projected/proximal gradient methods \cite{nesterov2018lectures, Beck2017first}. Indeed, in recent years there is a significant research effort to design Frank-Wolfe variants with linear convergence rates under strong convexity or the weaker assumption of quadratic growth (see Definition \ref{def:QG} in the sequel), with most efforts focused on the case in which the feasible set is a convex and compact polytope \cite{GueLat1986,Garber13,Garber16linearly,lacoste2015global,beck2017linearly,GM16, Pena16,Pena19,locally2020,Garber20}.
	
	Despite of the above results, there is a significant disadvantage for both the standard Frank-Wolfe method and its linearly converging variants for polytopes (in comparison to projected/proximal gradient methods) that has not been addressed so far --- the inherent dependency of the convergence rates on the diameter of the feasible set. First, while projected/proximal gradient methods enjoy the benefit of ``warm-start" initialization, i.e., their convergence rates often depend on the distance of the initialization point from the optimal set (see for instance \cite{nesterov2018lectures, Beck2017first}), for the Frank-Wolfe method we do not get such a dependence  and even with a ``warm-start" initialization, the rate depends on the diameter of the entire feasible set\footnote{In \cite{freund2016new} it was shown that with a modified step-size, a ``warm-start'' could be leveraged to reduce the number of iterations required by Frank-Wolfe to reach a desired approximation error by an additive constant, however the resulting rate still depends on the diameter of the entire set.}. Second, existing linearly convergent variants for polytopes depend on the diameter of the polytope, i.e., the convergence rate is of the form $\exp(-\Theta(D_{\mK}^{-2}t))$, where $\DK$ is the diameter (e.g.,  \cite{Garber16linearly,lacoste2015global,GM16}), which is in stark contrast to proximal/projected gradient methods which, under strong convexity/quadratic growth of objective, enjoy a linear convergence rate with exponent that is independent of the diameter. 
	
	Such inferior dependence on the diameter  is of great importance in many setups of interest for Frank-Wolfe-type methods. 
	As a running example, consider polytopes that arise naturally from combinatorial structures such as the flow polytope (convex-hull of source-target paths in a directed acyclic graph), the spanning trees polytope of a graph, the perfect matchings polytope of a bipartite graph, or the base-polyhedron of a matroid. In all of these cases, solving the Frank-Wolfe linear optimization step can be done very efficiently using simple well-known combinatorial algorithms. However, for all of these polytopes the Euclidean diameter is in worst-case $\Theta(\sqrt{n})$, where $n$ is the number of vertices in the above-mentioned graph-induced polytopes, and the size of the bases of the matroid in case of the base-polyhedron. Thus, with high-dimensional problems in mind, it is of clear interest to improve the complexity of Frank-Wolfe-type methods in terms of the diameter.
	
	Our approach towards tackling this challenge is to consider Frank-Wolfe variants with a seemingly stronger oracle than the standard linear optimization oracle. As we shall discuss in the sequel, it turns our that in many setups of interest, this stronger oracle could be implemented with the same complexity as the standard linear optimization oracle.
	
	Concretely, let us denote the set of extreme points of the feasible set by $\mV$. The Frank-Wolfe method assumes the availability of an oracle that solves the following linear optimization problem over $\mV$:
	\begin{align}\label{eq:linearOracle}
		\argmin_{\v\in\mV}\v^{\top}\nabla{}f(\x),
	\end{align} 
	where $\x$ is some feasible point. We emphasize that the linear problem is solved over the set of extreme points (and not entire feasible set), since in most cases of interest indeed an efficient implementation of the linear optimization oracle will only consider the extreme points of the set (e.g., in all combinatorial polytopes mentioned above and for other settings of interest such as the nuclear norm ball of matrices or the set of trace-bounded positive semidefinite matrices \cite{Jaggi13b}).
	
	In this paper we suggest using the following oracle which minimizes an $\ell_2$-regularized version of \eqref{eq:linearOracle}:
	\begin{align}\label{eq:quadOracle}
		\argmin_{\v\in\mV}\v^{\top}\nabla{}f(\x) + \lambda\Vert{\v-\x}\Vert^2,
	\end{align} 
	where $\Vert\cdot\Vert$ denotes the Euclidean norm and $\lambda >0$. Note that \eqref{eq:quadOracle} could be rewritten as
	\begin{align}\label{eq:NEPOracle}
		\argmin_{\v\in\mV}\Vert{\v-\big(\x-\frac{1}{2\lambda}\nabla{}f(\x)\big)}\Vert^2.
	\end{align} 
	That is, we assume that given some point $\y$ we can efficiently find the nearest extreme point (NEP) in the feasible set. We thus refer to an oracle for solving \eqref{eq:NEPOracle} as a NEP oracle. 
	Here it is very important to emphasize that the optimization problems \eqref{eq:quadOracle}, \eqref{eq:NEPOracle} are only solved w.r.t. the set of extreme points --- the set $\mV$. This is very different from the regularized oracles suggested before in \cite{migdalas94, lan2013complexity}  which solve certain regularized problems w.r.t. the entire feasible set, and thus have complexity similar to that of computing projection over the set, which is exactly the computational bottleneck that Frank-Wolfe-type methods aim to avoid. We also note that an oracle of the form \eqref{eq:quadOracle} was implicitly already considered in \cite{garber2016faster}, but there it was for the specific case in which the feasible set is the spectrahedron (unit-trace positive semidefinite matrices) and the aim was to obtain faster rates in terms of the iteration counter $t$.
	
	Let us denote by $\mX^*$ the optimal set, that is $\mX^*=\arg\min_{\x\in\mK}f(\x)$. We also denote $f^*=\min_{\x\in\mK}f(\x)$. Define
	\begin{align}\label{eq:Sstar}
		S^*\in{\argmin}_{S\subseteq\mV} \{\max_{\u,\v\in S}\Vert{\v-\u}\Vert:\mX^*\subseteq\conv(S)\}, \qquad
		D^*=\max_{\u,\v\in S^*} \Vert{\v-\u}\Vert.
	\end{align}
	That is, $S^*$ is a subset of extreme points of minimum diameter whose convex-hull contains the optimal set $\mX^*$, and $D^*$ is the corresponding diameter. Also, when $\mK$ is a convex and compact polytope, we let $\mF^*$ denote the lowest-dimensional face of $\mK$ such that $\mX^*\subseteq\mF^*$ and we let $D_{\mF^*}$ denote the Euclidean diameter of $\mF^*$.
	As we shall discuss in detail in the sequel, in many cases of interest we have that $D^* \ll \DK$ or $D_{\mF^*} \ll \DK$. For instance, when there is a unique optimal solution which is an extreme point we have that $D^*=0$, or very often in case $\mK$ is a polytope and $\dim\mF^* \ll d$ --- a natural notion of sparsity for polytopes.
	
	Our main contributions, in an informal and simplified presentation, are as follows:
	\begin{enumerate}
		\item
		We show that for many feasible sets of interest the NEP oracle \eqref{eq:NEPOracle} could be implemented with the same complexity as the standard optimization linear oracle \eqref{eq:linearOracle}. Cases of interest include $0\text{--}1$ polytopes, nuclear norm balls of matrices, bound-trace positive semidefinite matrices, the unit spectral norm ball of matrices, and unstructured convex hulls. See Section \ref{sec:examples}.
		\item 
		We present a natural variant of the Frank-Wolfe method which replaces the linear optimization oracle with the proposed NEP oracle and enjoys an improved rate of $O(\beta(D^{*2}+D_L^2)/t)$, where $D_L$ is the diameter of the initial level set.\footnote{That is $D_L=\max\{\Vert{\u-\v}\Vert~|~\u\in\mK,\v\in\mK,f(\u)\leq f(\x_1), f(\v)\leq f(\x_1)\}$, where $\x_1$ is the initialization point.\label{footnote1}} Assuming quadratic growth of the objective w.r.t. the feasible set, this rate improves to $O(\beta{}D^{*2}/t)$ plus a lower-order term of the form $\log(t)/t^2$. We also show such rates are not possible to obtain via Frank-Wolfe variants using a linear optimization oracle. See Theorems \ref{thm:alg1 rate} and \ref{thm: theorem 1}.
		\item 
		In case $\mK$ is a polytope and quadratic growth holds, we present a linearly convergent FW variant which replaces the linear optimization oracle with a NEP oracle and enjoys a rate of the form $\exp(-t/(dD_{\mF^*}^{2}))$, which improves upon the previous best $\exp(-t/(dD_{\mK}^2))$ (e.g., \cite{Garber13,Garber16linearly,lacoste2015global}.\footnote{Here for simplicity we hide dependencies on certain geometric quantities of the polytope which are standard for such results, as well as on the objective's condition number $\beta/\alpha$, where $\alpha$ is the quadratic growth parameter.} For many polytopes of interest this gives the first linearly convergent algorithm with rate independent of the diameter of the entire polytope and whose per-iteration oracle complexity matches that of Frank-Wolfe. See Theorem \ref{thm: linrate alg3}.
		
		\item
		In case $\mK$ is a polytope and both quadratic growth and strict complementarity (see definition in the sequel) hold, our linearly convergent variant converges with rate  $\exp(-t/(\dim\mF^*D_{\mF^*}^{2}))$, where $\dim\mF^*$ is the dimension of the optimal face $\mF^*$. As a consequence, for many $0\text{--}1$ polytopes we obtain the first FW variant whose convergence rate is independent of the dimension, provided that $\mF^*$ is low-dimensional. See Theorem \ref{thm: alg3 lin delta}.
		
		\item
		We demonstrate that a NEP oracle could also lead to similar improvements to those in item 2 above in the stochastic setting, by analyzing a variant of the Stochastic Frank-Wolfe method. See Theorem \ref{thm: stoc improved rate}.
	\end{enumerate}
	
	
	
	\subsection{Notation}
	We use boldface lowercase letters to denote vectors and lightface letters to denote scalars. When considering the space of matrices $\reals^{m\times n}$ or that of symmetric $n\times n$ matrices $\mbS^n$, we use  boldface uppercase letters to denote matrices. Throughout, we let $\Vert{\cdot}\Vert$ denote the Euclidean norm. For matrices we let $\Vert{\cdot}\Vert_2$ denote the spectral norm (i.e., largest singular value) and $\Vert{\cdot}\Vert_F$ denote the (Euclidean) Frobenius norm. For a set $S$ of points in a Euclidean vector space we let $\conv(S)$ denote their convex-hull. Given a point $\x$ and convex and compact set $\mS$ in a Euclidean vector space, we let $\dist(\x,\mS)$ denote the Euclidean distance of $\x$ from $\mS$.

	\subsection{Examples of feasible sets of interest}\label{sec:examples}
	
	\paragraph*{0\text{--}1 polytopes:} $0\text{--}1$ polytopes are polytopes which satisfy $\mV\subseteq\{0,1\}^d$, i.e., all vertices have either 0 or 1 entries. This family of polytopes captures many combinatorial polytopes of interest including the flow polytope, the spanning-tree polytope, the perfect matchings polytope of a bipartite graph, the base polyhedron of a matroid, the unit simplex, the hypercube $[0,1]^d$, and many more.
	
	For such polytopes, the NEP oracle could be implemented directly using a linear optimization oracle since for any vector $\y$ we have
	\begin{align*}
		\argmin_{\v\in\mV}\Vert{\v-\y}\Vert^2 = \argmin_{\v\in\mV}\Vert{\v}\Vert^2 - 2\v^{\top}\y = \argmin_{\v\in\mV}\v^{\top}\mathbf{1} - 2\v^{\top}\y = \argmin_{\v\in\mV}\v^{\top}(\mathbf{1}-2\y),
	\end{align*}
	where $\mathbf{1}$ denotes the all-ones vector.
	
	For many of these polytopes (e.g., flow polytope, perfect matchings polytope of a bipartite graph, hypercube, but not the simplex), the diameter of the optimal face scales with its dimension, e.g.,  $\textrm{diam}(\mF^*) = \Theta(\sqrt{\dim\mF^*})$. Thus, when the optimal set $\mX^*$ is contained within a low-dimensional face $\mF^*$, we indeed have that $D^*\leq D_{\mF^*} \ll D_{\mK}$. Low-dimensionality of the optimal face can be seen as a natural notion of sparsity (since it implies any optimal solution can be expressed as a sparse convex combination of extreme points), which is an important concept in many machine learning / statistical settings (see  also discussions in the recent work \cite{Garber20}).
	
	\paragraph*{The spectrahderon and trace-norm balls:} The spectrahderon $\{\X\in\mbS^n~:~\X\succeq 0,~\trace(\X)=\tau\}=\conv\{\tau\u\u^{\top}~|~\u\in\reals^n,~\Vert{\u}\Vert=1\} $ and the nuclear norm ball $\{\X\in\reals^{m\times n}~|~\Vert{\X}\Vert_*\leq \tau\}=\conv\{\tau\u\v^{\top}~|~\u\in\reals^m,\v\in\reals^n,~\Vert{\u}\Vert=\Vert{\v}\Vert=1\}$, where $\tau >0$, are two highly popular convex relaxations for matrix rank constraint, and are ubiquitous in convex relaxations for low-rank matrix recovery problems (e.g., low-rank matrix completion). Optimization over these sets is one of the main reasons for the increasing popularity of Frank-Wolfe-type methods, since linear optimization over these sets amounts to a rank-one SVD computation --- a task for which there exists very efficient iterative methods (e.g., power iterations, Lanczos algorithm), while Euclidean projection requires in general a full-rank SVD computation \cite{jaggi2010simple,garber2016faster,allen2017linear}.
	Since all extreme points of these sets have the same Euclidean norm, the NEP oracle is equivalent to the linear optimization oracle --- see \textit{Norm-uniform sets} in the sequel, and hence could be implemented with the same complexity.
	
	Consider now the case in which the feasible set is the spectrahedron and suppose that the optimal solution is unique and can be written in the form $\X^*=\sum_{i=1}^m\lambda_i\tau\u_i\u_i^{\top}$, where $(\lambda_1,\dots,\lambda_m)$ is in the unit simplex, $\Vert{\u_i}\Vert=1, i=1,\dots,m$, and for all $i\neq j$ we have $(\u_i^{\top}\u_j)^2 \geq 1-\gamma$, for some $\gamma > 0$. In this case, it follows that $D^* \leq \max_{i,j}\Vert{\tau\u_i\u_i^{\top}-\tau\u_j\u_j^{\top}}\Vert_F \leq \sqrt{2\gamma}\tau$. Thus, for $\gamma \ll 1$ (which means $\X^*$ admits a crude approximation via a rank-one matrix), we can indeed have $D^* \ll \sqrt{2}\tau = D_{\mK}$. Clearly, a similar argument holds for the nuclear norm ball as well.

	\paragraph*{Unit spectral norm ball:} Consider the set of matrices $\mK = \{\X\in\reals^{m\times n}~|~\Vert{\X}\Vert_2\leq 1\} = \conv\{\U\in\reals^{m\times n}~|~\U^{\top}\U=\I\}$, $m\geq n$. Linear optimization over this set amounts to a SVD computation \cite{Jaggi13b}. Since, as in the previous example, all extreme points have the same Euclidean norm, the NEP oracle is equivalent to the linear optimization oracle (see \textit{Norm-uniform sets} next). While for this set we have $D_{\mK} = 2\sqrt{n}$, similarly to the example for the spectrahedron above, we can clearly have $D^* \ll 2\sqrt{n}$. 
	
	\paragraph*{Norm-uniform sets:} In case all extreme points have the same Euclidean norm i.e., $\Vert{\u}\Vert=\Vert{\v}\Vert$ for all $\u,\v\in\mV$, we clearly have that the NEP oracle could be implemented using a single call to the standard linear optimization oracle since for any vector $\y$:
	\begin{align*}
		\argmin_{\v\in\mV}\Vert{\v-\y}\Vert^2 = \argmin_{\v\in\mV}\Vert{\v}\Vert^2 - 2\v^{\top}\y = \argmin_{\v\in\mV}\v^{\top}(-2\y).
	\end{align*}
	Note that many of the above examples (e.g., perfect matchings polytope, base polyhedron of a matroid, spectrahedron, nuclear norm ball, unit spectral norm ball etc.) satisfy this property. Clearly, $\ell_1$, $\ell_2$ and $\ell_{\infty}$ balls in $\reals^d$ also satisfy this property.
	
	\paragraph*{Unstructured convex-hulls:} In case the feasible set $\mK$ is  given by its set of extreme points without further structure, i.e., $\mK=\conv\{\mV\}$, where the set $\mV$ is explicitly given, linear optimization amounts to computing the linear function over all extreme points and taking the minimum. Clearly, we can also implement the NEP oracle with the same complexity.

	\subsection{The quadratic growth property}
	Most  (but not all) results we report in this paper hold under the quadratic growth property which has similar consequences to strong convexity but is a weaker assumption.
	\begin{definition}[quadratic growth]\label{def:QG} 
		We say a function $f:\mK\rightarrow\reals$ satisfies the quadratic growth property with parameter $\alpha>0$ w.r.t. a convex and compact set $\mK$ if for all $\x\in\mK$ it holds that $ \dist(\x,\mX^*)^2 \leq 2\alpha^{-1}\left({f(\x)-f^*}\right)$.
	\end{definition}
	
	Quadratic growth is known to hold whenever $f(\x)$ is of the form $f(\x)=g(\A\x)+\b^{\top}\x$ for $\alpha_g$-strongly convex $g:\reals^m\rightarrow\reals$, $\A\in\reals^{m\times d}$, and $\mK$ is a convex and compact polytope (see for instance \cite{beck2017linearly,Garber19}). Very recently, it was also established that for the important matrix domains --- the spectrahedron and nuclear norm ball (mentioned above), for $f(\cdot)$ of this form which also satisfies a certain strict complementarity condition, quadratic growth also holds, see \cite{ding2020spectral,ding2020k,Garber2019linear}. Note that such structure of $f(\cdot)$ holds in particular for $g(\z) =\Vert{\z-\y}\Vert^2$ and $\A$ being an underdetermined linear map which captures some of the most fundamental problems in statistics and machine learning.
	
	\section{Main Results}
	In this section we present our novel algorithms and corresponding convergence rates. The complete proofs, as well as additional results, are given in the appendix.
	
	\subsection{A NEP Oracle-based Frank-Wolfe Variant for General Convex Sets}
	Our first line of results concerns a straightforward adaptation of the standard Frank-Wolfe method, where the call to the linear optimization oracle is replaced with a call to the NEP oracle, see Algorithm \ref{alg:Algorithm 1} below. We note that we use a modified step-size $\tilde{\eta}_t$  for the convex combination on each iteration (and not the one used to compute the new extreme point $\v_t$) in order to guarantee that out method is a decent method which is important for achieving the improved dependencies on the initialization point. 
	
	\begin{algorithm}[H]
		\caption{Frank-Wolfe with Nearest Extreme Point Oracle}
		\label{alg:Algorithm 1}
		\begin{algorithmic}[1]
			\STATE Input: sequence of step-sizes $\{\eta_t\}_{t\geq 1} \subset [0,1]$
			\STATE $\x_1 \leftarrow$ some arbitrary point in $\mV$
			\FOR {$t=1,2, \dotsc $}
			\STATE $\v_t \leftarrow \argmin_{\v\in \mV} \v^\top\nabla f(\x_t) + \frac{\beta\eta_t}{2}\lVert \x_t - \v \rVert^2$ \\
			\COMMENT{equivalent to $\v_t\gets\argmin_{\v\in\mV}\Vert{\v-(\x_t - (\beta\eta_t)^{-1}\nabla{}f(\x_t))}\Vert^2$}
			\STATE pick $\tilde{\eta}_t\in[0,1]$ such that $f((1-\tilde{\eta}_t)\x_t + \tilde{\eta}_t \v_t) \leq \min\{f((1-{\eta}_t)\x_t + {\eta}_t \v_t), f(\x_t)\}$
			\STATE $\x_{t+1} \leftarrow (1-\teta_t)\x_t + \teta_t \v_t$
			\ENDFOR
		\end{algorithmic}
	\end{algorithm}
	
	\begin{theorem}\label{thm:alg1 rate}
		Using Algorithm~\ref{alg:Algorithm 1} with step-size $\eta_t=\frac{2}{t+1}$ we have
		\begin{align*}
			\forall t\geq 2: \quad f(\x_t)-f^* \leq \frac{2\beta(\DO^2+D_L^2)}{t+1},
		\end{align*}
		where $D_L$ is the diameter of the initial level set (see Footnote \ref{footnote1}).
		
		Moreover, if $f(\cdot)$ has the quadratic growth property over $\mK$ with parameter $\alpha >0$, then 
		\begin{align*}
			\forall t\geq 2:\quad  f(\x_t)-f^*\leq \frac{2\beta\DO^2}{t+1} + \frac{\frac{8\beta^2}{\alpha}(\DO^2+\min\{2\alpha^{-1}(f(\x_1)-f^*),D_L^2\})\log(t)}{t^2}.
		\end{align*}
	\end{theorem}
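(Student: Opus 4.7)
The plan is to establish a single descent recursion
\begin{equation*}
h_{t+1} \leq (1-\eta_t)\,h_t + \tfrac{\beta \eta_t^2}{2}\bigl(\|\x_t - \x^*\|^2 + \DO^2\bigr), \qquad h_t := f(\x_t) - f^*,
\end{equation*}
valid for every $\x^*\in\mX^*$, and then to derive both claims from it by plugging in appropriate bounds on $\|\x_t - \x^*\|^2$ and applying the standard Frank-Wolfe induction for $\eta_t = 2/(t+1)$.

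To derive the recursion I apply $\beta$-smoothness to the hypothetical iterate $(1-\eta_t)\x_t + \eta_t\v_t$,
\begin{equation*}
f\bigl((1-\eta_t)\x_t + \eta_t \v_t\bigr) \leq f(\x_t) + \eta_t\bigl[\nabla f(\x_t)^{\top}(\v_t - \x_t) + \tfrac{\beta \eta_t}{2}\|\v_t - \x_t\|^2\bigr].
\end{equation*}
The bracketed quantity is precisely what the oracle in line~4 of Algorithm~\ref{alg:Algorithm 1} minimizes over $\mV$ (modulo an $\x_t$-dependent constant). Writing any $\x^*\in\mX^*\subseteq\conv(S^*)$ as $\x^* = \sum_i \lambda_i \u_i$ with $\u_i \in S^*$ and averaging the oracle's optimality inequality against each $\u_i$ gives
\begin{equation*}
\nabla f(\x_t)^{\top}\v_t + \tfrac{\beta \eta_t}{2}\|\v_t - \x_t\|^2 \leq \nabla f(\x_t)^{\top}\x^* + \tfrac{\beta \eta_t}{2}\sum_i \lambda_i\|\u_i - \x_t\|^2.
\end{equation*}
The variance identity $\sum_i \lambda_i \|\u_i - \x_t\|^2 = \|\x^* - \x_t\|^2 + \sum_i \lambda_i \|\u_i - \x^*\|^2$, together with $\|\u_i - \x^*\| \leq \DO$ (which holds since $\x^*\in\conv(S^*)$ and $S^*$ has diameter $\DO$), bounds the quadratic term by $\|\x^* - \x_t\|^2 + \DO^2$. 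Combined with convexity $\nabla f(\x_t)^{\top}(\x^*-\x_t) \leq f^* - f(\x_t)$ and the line~5 guarantee $f(\x_{t+1}) \leq f((1-\eta_t)\x_t+\eta_t\v_t)$, this produces the recursion.

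For the first claim, line~5 additionally guarantees $f(\x_{t+1}) \leq f(\x_t)$, so every iterate stays in the initial level set and $\|\x_t - \x^*\| \leq D_L$ for any $\x^*\in\mX^*$; the standard induction on $h_{t+1} \leq (1-\eta_t)h_t + \tfrac{\beta\eta_t^2}{2}(\DO^2 + D_L^2)$ then yields the claimed $2\beta(\DO^2+D_L^2)/(t+1)$ bound. For the second claim I pick $\x^*_t := \arg\min_{\y\in\mX^*}\|\x_t - \y\|$, so quadratic growth yields $\|\x_t - \x^*_t\|^2 \leq 2\alpha^{-1} h_t$; combining with descent gives the time-independent estimate $\|\x_t - \x^*_t\|^2 \leq \min\{2\alpha^{-1}h_1, D_L^2\}$. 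Plugging this constant bound into the recursion and repeating the induction produces the auxiliary rate $h_t \leq M/(t+1)$ with $M := 2\beta(\DO^2 + \min\{2\alpha^{-1}h_1, D_L^2\})$. Bootstrapping by now inserting the QG bound $\|\x_t-\x^*_t\|^2 \leq 2\alpha^{-1}h_t \leq 2M/(\alpha(t+1))$ into the recursion yields
\begin{equation*}
h_{t+1} \leq (1-\eta_t)h_t + \tfrac{\beta \eta_t^2 \DO^2}{2} + \tfrac{\beta \eta_t^2 M}{\alpha(t+1)}.
\end{equation*}
Multiplying by $t(t+1)$ and using $(t+1)(1-\eta_t) = t-1$ produces the telescoping inequality $t(t+1)h_{t+1} \leq (t-1)t\,h_t + 2\beta\DO^2 + \tfrac{4\beta M}{\alpha(t+1)}$; summing from $1$ to $T$ and using $\sum_{t=1}^T (t+1)^{-1} = O(\log T)$, then dividing by $T(T+1)$ and substituting the value of $M$, produces the claimed $\tfrac{2\beta\DO^2}{t+1} + \tfrac{(8\beta^2/\alpha)(\DO^2+\min\{\ldots\})\log(t)}{t^2}$ bound.

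The main obstacle I expect is the bookkeeping in the bootstrap step: verifying that the constant $8\beta^2/\alpha$ and the precise $\min\{2\alpha^{-1}(f(\x_1)-f^*), D_L^2\}$ factor emerge exactly as stated requires carefully threading the auxiliary $M/(t+1)$ bound through the QG inequality and the step-size $\eta_t^2 = 4/(t+1)^2$ while keeping the telescoping sum clean, and also handling the base case $t=1,2$ of the induction separately to align with the theorem's range $t\geq 2$.
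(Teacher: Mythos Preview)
Your proposal is correct and follows essentially the same route as the paper: the paper's Lemma~\ref{lem: alg1 one step improve} is exactly your descent recursion (derived there via an add--subtract of $\x^*$ and linearity over $\conv(S^*)$, which is equivalent to your variance identity), and the paper packages your telescoping argument into a standalone recursion lemma (Lemma~\ref{lem: new recursion}) that yields $h_t\le \tfrac{1}{t^2}\sum_{k=1}^{t-1}b_k$, from which both claims are read off exactly as you outline --- first with the constant bound $\dist(\x_t,\mX^*)^2\le \min\{2\alpha^{-1}h_1,D_L^2\}$, then bootstrapped via quadratic growth and the auxiliary $M/(k+1)$ rate. Your anticipated bookkeeping obstacle is the only place requiring care, and it goes through with the stated constants once you keep the factor $\tfrac{k}{k+1}$ in the telescoping sum rather than bounding it by $1$ immediately.
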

	Theorem \ref{thm:alg1 rate} shows that as opposed to the standard convergence rate of the Frank-Wolfe method which, regardless of the initialization, is $O(\beta{}D_{\mK}^2/t)$, our NEP oracle-based variant has a rate of the form $O(\beta{}D^{*2}/t)$ plus an additional natural term that depends on the quality of the initialization point. In particular, under quadratic growth, this additional term decays at a fast rate of $O(\log{}t/t^2)$, and thus even without good initialization, our algorithm has significantly improved complexity whenever $D^* \ll D_{\mK}$.
	
	\subsubsection{Complementary lower bound for linear optimization-based FW variants}
	We now present a complementary result showing that the rates reported in Theorem \ref{thm:alg1 rate} are impossible to obtain in general for Frank-Wolfe variants using only a linear optimization oracle.
	
	\begin{definition}[Frank-Wolfe-type method (see also \cite{Garber20})]\label{def:fwmethod}
		An iterative algorithm for the optimization problem $\min_{\x\in\mK}f(\x)$, where $\mK$ is convex and compact and $f(\cdot)$ is smooth and convex, is a Frank-Wolfe-type method if on each iteration $t$, it performs a single call to the linear optimization oracle of $\mK$ w.r.t. the point $\nabla{}f(\x_t)$, i.e., computes some $\v_t\in\argmin_{\v\in\mK}\v^{\top}\nabla{}f(\x_t)$, where $\x_t$ is the current iterate, and produces the next iterate $\x_{t+1}$ by taking some convex combination of the points in $\{\x_1,\v_1,\dots,\v_t\}$, where $\x_1$ is the initialization point.
	\end{definition}

	\begin{theorem}\label{thm: theorem 1}
		Let $\mK=[0,1]^d$, and fix a positive integer $m<d$. Let $\xo=\frac{1}{2}\sum_{i=1}^{m} \e_i$, i.e., $\xo$ has $\frac{1}{2}$ for the first $m$ coordinates and 0 for the rest.
		Now consider the minimization of the function $f(\x) = \frac{1}{2}\lVert \x - \xo \rVert^2$ over $\mK$ starting at $\x_1=\e_{m+1}$. Then, for any Frank-Wolfe-type method there exists a sequence of answers returned by the linear optimization oracle such that for any $t\leq \lfloor \sqrt{d-m-1} \rfloor$, the $t$th iterate of the algorithm $\x_t$ satisfies $f(\x_t)-f^* \geq \frac{1}{4}$. 
	\end{theorem}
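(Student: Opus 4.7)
My plan is to exhibit an adversarial linear-optimization oracle that forces any Frank-Wolfe-type method to stay at least $1/4$ away from $f^\ast = 0$ throughout the promised range. Set $N := \lfloor\sqrt{d-m-1}\rfloor$, so that $N^2 \leq d - m - 1$, and fix $N$ pairwise-disjoint ``fresh'' blocks $B_1, \ldots, B_N \subseteq \{m+2, \ldots, d\}$, each of cardinality exactly $N$. At iteration $s \leq N$, given the current iterate $\x_s$, the oracle returns $\v_s \in \{0,1\}^d$ defined by $(\v_s)_i = 1$ for every $i \in \{1,\ldots,m\}$ with $(\x_s)_i \leq 1/2$, $(\v_s)_j = 1$ for every $j \in B_s$, and $0$ elsewhere. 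Since $\nabla f(\x_s) = \x_s - \xo$, the sign of $\nabla f(\x_s)_i$ equals the sign of $(\x_s)_i - 1/2$ for $i \leq m$ and equals the sign of $(\x_s)_j$ for $j > m$; because the blocks are disjoint and neither $\x_1 = \e_{m+1}$ nor any prior $\v_{s'}$ is supported on $B_s$, the gradient vanishes on $B_s$, letting the adversary include the whole block. A direct check then confirms that $\v_s$ as constructed is indeed a minimizer of $\v^\top \nabla f(\x_s)$ over the vertex set $\{0,1\}^d$ of $\mK = [0,1]^d$.

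Next, I express $f(\x_t)$ in terms of the algorithm's coefficients. Writing $\x_t = \alpha_0 \x_1 + \sum_{s=1}^{t-1}\alpha_s \v_s$ with $\alpha_s \geq 0$ summing to $1$, the construction forces $(\x_t)_{m+1} = \alpha_0$, $(\x_t)_j = \alpha_s$ for $j \in B_s$, $(\x_t)_j = 0$ at the remaining coordinates $j > m$, and $(\x_t)_i = \beta$ for every $i \leq m$, where $\beta := \sum_{s \in G}\alpha_s$ and $G$ is the algorithm-determined set of iterations at which the adversary set $(\v_s)_i = 1$ on $\{1,\ldots,m\}$; in particular $\beta \in [0, 1-\alpha_0]$. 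Plugging into $f(\x_t) = \tfrac{1}{2}\lVert \x_t - \xo \rVert^2$ and applying Cauchy--Schwarz to the $t-1$ block contributions, namely $\sum_{s=1}^{t-1}\alpha_s^2 \geq (1-\alpha_0)^2/(t-1)$, I obtain
\[
f(\x_t) \;\geq\; \tfrac{1}{2}\Big[\,m(\beta - \tfrac{1}{2})^2 + \alpha_0^2 + \tfrac{N(1-\alpha_0)^2}{t-1}\,\Big].
\]

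To finish I minimize the right-hand side over $\alpha_0 \in [0,1]$ and $\beta \in [0, 1-\alpha_0]$; it suffices to show the minimum is $\geq 1/4$ whenever $t - 1 \leq N$. If $\alpha_0 \leq 1/2$, I pick $\beta = 1/2$ to annihilate the first term; the convex quadratic $\alpha_0^2 + N(1-\alpha_0)^2/(t-1)$ has unconstrained minimizer $N/(N+t-1)$, which is $\geq 1/2$ under $t-1 \leq N$, so on $[0,1/2]$ it is decreasing and hits the value $1/4 + N/(4(t-1)) \geq 1/2$ at $\alpha_0 = 1/2$. If $\alpha_0 > 1/2$, the best feasible $\beta$ is $1 - \alpha_0$, turning the first term into $m(\alpha_0 - 1/2)^2$; a direct minimization of the resulting convex quadratic in $\alpha_0$ yields the minimum value $(m(1+q) + 4q)/(4(m+1+q))$ with $q := N/(t-1)$, and a short algebraic rearrangement shows this is $\geq 1/2$ iff $q \geq 1$, again equivalent to $t - 1 \leq N$. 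Both cases therefore deliver $f(\x_t) \geq 1/4$, covering the claimed range $t \leq \lfloor\sqrt{d-m-1}\rfloor$. The one conceptually subtle point is adaptivity: the adversary responds to the current $\x_s$, and the algorithm could try to manipulate $G$ by making some intermediate $(\x_s)_i$ exceed $1/2$; however, this is absorbed into the bound $\beta \in [0, 1-\alpha_0]$, and because the case split over $\alpha_0$ is exhaustive the algorithm cannot gain from such a manipulation.
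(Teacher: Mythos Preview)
Your proof is correct and rests on the same adversarial idea as the paper: partition the ``garbage'' coordinates $\{m+2,\dots,d\}$ into $N=\lfloor\sqrt{d-m-1}\rfloor$ disjoint blocks of size $N$, and at iteration $s$ have the oracle return a vertex supported on the fresh block $B_s$ (legitimate because the gradient vanishes there). The two arguments diverge only in the bookkeeping. You keep track of all coordinates, including the first $m$, which forces you to introduce the auxiliary variable $\beta=\sum_{s\in G}\alpha_s$ and then carry out a case split on $\alpha_0\lessgtr 1/2$ together with an explicit minimization of the resulting quadratic. The paper instead throws the first $m$ coordinates away at the outset, bounding $f(\x_t)\geq\tfrac12\|\x_t^{(d-m)}\|^2$; it then writes $\x_t=(1-\gamma)\x_1+\gamma\w$ with $\w\in\conv(\v_1,\dots,\v_k)$, uses the orthogonality of $\x_1^{(d-m)},\v_1^{(d-m)},\dots,\v_k^{(d-m)}$ and the fact $\|\w^{(d-m)}\|^2\geq 1$ (Cauchy--Schwarz on the simplex) to obtain $f(\x_t)\geq\tfrac12\big((1-\gamma)^2+\gamma^2\big)\geq\tfrac14$ in one line. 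This sidesteps your case analysis entirely and shows the first $m$ coordinates are irrelevant for the lower bound.

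Two minor points you may want to make explicit: (i) the claim that $(\x_t)_i$ is the same value $\beta$ for every $i\leq m$ relies on a short symmetry/induction argument (all first $m$ coordinates start equal and the oracle treats them identically), and (ii) the case $t=1$ must be handled separately since your displayed bound has $t-1$ in the denominator; of course $f(\x_1)=\tfrac{m}{8}+\tfrac12\geq\tfrac14$ directly.
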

	
	Note that for the problem described in Theorem \ref{thm: theorem 1} it holds that $f(\x_1)-f^* = \frac{1}{2}+\frac{m}{8}$, $D^*=\sqrt{m}$, and $\alpha=\beta=1$. Thus, when $m\ll \sqrt{d}$, we have that $O(m)$ iterations suffice for Algorithm \ref{alg:Algorithm 1} to obtain approximation error $< 1/4$, while any Frank-Wolfe-type method with a linear optimization oracle will require $\Omega(\sqrt{d})$ iterations. Moreover, all currently existing upper-bounds for Frank-Wolfe-type methods actually require $\Omega(d)$ iterations (since $D_{\mK}^2 = d$ in this case).
	
	\subsection{A NEP Oracle-based Linearly-Convergent Variant for Polytopes}\label{sec:linres}
	We now turn to discuss our NEP oracle-based linearly convergent algorithm for the case in which the feasible set $\mK$ is a convex and compact polytope.
	Our algorithm (see Algorithm \ref{alg:Algorithm 3}) is an adaptation of the \textit{fully-corrective} Frank-Wolfe variant \cite{Jaggi13b}, where the linear optimization step is replaced in a straightforward manner with the new NEP oracle (similarly to Algorithm \ref{alg:Algorithm 1}). On each iteration of the algorithm we optimize either the quadratic-upper bound on the objective due to smoothness (Option 1) or the objective itself (Option 2) over the convex-hull of all previously found vertices. 
	
	\begin{algorithm}
		\caption{Linearly Convergent Frank-Wolfe for Polytopes with a NEP Oracle}
		\label{alg:Algorithm 3}
		\begin{algorithmic}[1]
			\STATE Input: a sequence $\{\rho_t\}_{t\geq 1} \subset[0,1]$
			\STATE $\x_1 \leftarrow$ some arbitrary point in $\mV$
			\FOR {$t=1,2, \dotsc $}
			\STATE Let $\sum_{i=1}^k \lambda_i\v_i$ be an explicitly maintained decomposition of $\x_t$ to vertices
			\STATE $\v_{k+1}\leftarrow {\argmin}_{\u\in\mV} \u^\top\nabla f(\x_t)+\beta\rho_t\Vert{\u-\x_t}\Vert^2$
			\STATE \textbf{Option 1:} 
			\STATE \hspace{\algorithmicindent}$\x_{t+1}\leftarrow{\argmin}_{\x\in\conv(\v_1,\dots,\v_{k+1})} \x^\top\nabla f(\x_t)+\frac{\beta}{2}\Vert{\x-\x_t}\Vert^2$
			\STATE \textbf{Option 2 (fully-corrective):}
			\STATE\hspace{\algorithmicindent} $\x_{t+1}\leftarrow{\argmin}_{\x\in\conv(\v_1,\dots,\v_{k+1})} f(\x)$
			\ENDFOR
		\end{algorithmic}
	\end{algorithm}
	
	We make some comments regarding the efficient solution of the quadratic optimization problem in lines 7 and 9 of Algorithm \ref{alg:Algorithm 3}. Regarding line 7,
	when the number of vertices in the decomposition of the current iterate $k$ satisfies $k \ll d$, using a preprocessing step (i.e., explicitly computing $\v_i^{\top}\nabla{}f(\x_t)$, $\v_i^{\top}\x_t$, $\v_i^{\top}\v_j$, $i=1,\dots,k+1,j=1,\dots,k+1$), this problem could be reformulated as convex quadratic minimization over the $k$-dimensional unit simplex. Such problems can be efficiently solved via fast first-order methods. The problem in line 9 can also be reformulated as  a convex problem over the $k$-dimensional unit simplex however,  solving it via first-order methods requires more evaluations of the objective's gradients.
	
	In order to present our guarantees for Algorithm \ref{alg:Algorithm 3}, we require some additional notation. Following \cite{Garber16linearly,Garber20} we assume $\mK$ is a convex and compact polytope in the form $\mK := \{\x\in\reals^d ~|~\A_1\x = \b_1,~\A_2\x\leq \b_2\}$, $\A_1\in\reals^{m_1\times{}d}$, $\A_2\in\reals^{m_2\times{}d}$. For a face $\mF$ of $\mK$ we define:
	\begin{align*}
		\dim\mF := d - \dim\textrm{span}\{&\{\A_1(1),\cdots\A_1(m_1)\}\cup\\
		&\{\A_2(i): i\in[m_2], \forall\x\in\mF:\A_2(i)^{\top}\x=\b_2(i)\}\}.
	\end{align*}
	We let $\mF^*\subseteq\mK$ denote the lowest-dimensional face of $\mP$ containing the set of optimal solutions, i.e., $\mX^*\subseteq\mF^*$.
	In the following we write $\mF^* = \{\x\in\reals^d~|~\A_1^*\x=\b_1^*,~\A_2^*\x\leq\b_2^*\}$.\footnote{The rows of $\A_1^*$ are exactly the rows of $\A_1$ plus rows of $\A_2$ which correspond to constraints that are tight for all points in $\mF^*$ and the vector $\b_1^*$ is defined accordingly.  The rows of $\A_2^*$ are the rows of $\A_2$ which correspond to constraints that are satisfied by some of the points in $\mF^*$ but not by others, and the vector $\b_2^*$ is defined accordingly.} 
	We let $\mathbb{A}^*$ denote the set of all $\dim\mF^*\times d$ matrices whose rows are linearly independent rows chosen from the rows of $\A_2^*$.  We define $\psi = \max_{\M\in\mathbb{A}^*}\Vert{\M}\Vert_2$ and 
	$\xi = \min_{\v\in\mV\cap\mF^*}\min_i\{{\b_2^*(i) - \A_2^*(i)}^{\top}\v ~|~ \b_2^*(i) > {\A_2^*(i)}^{\top}\v\}$. Finally, we let $D_{\mF^*}$ denote the diameter of the optimal face.
	
	\begin{theorem}\label{thm: linrate alg3}
		Suppose that $\mK$ is a convex and compact polytope and quadratic growth holds with parameter $\alpha > 0$. Let $C\geq f(\x_1)-f^*$ and $M\geq\max\{\frac{\beta}{\alpha}(4+8d\mu^2\DFO^2),\frac{1}{2}\}$, where $\mu=\frac{\psi}{\xi}$.\footnote{Note that for polytopes such as the flow polytope, the perfect matchings polytope of a bipartite graph, the $[0,1]^d$ hypercube and the unit simplex it holds that $\mu=1$.} Using Algorithm~\ref{alg:Algorithm 3} with parameter $\rho_t=\frac{\min\{\sqrt{\frac{2Cd\mu^2}{\alpha}\exp\big({-\frac{1}{4M}(t-1)}\big)},1\}}{2M}$  for all $t\geq 1$
		one has,
		\begin{align*}
			\forall t\geq1: \quad f(\x_t)-f^*\leq C\exp\Big(-\frac{t-1}{4M}\Big).
		\end{align*} 
	\end{theorem}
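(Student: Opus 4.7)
The plan is to derive a per-iteration progress inequality and then induct on $t$ to obtain the claimed exponential rate.

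First, I would exploit the minimization step in Algorithm~\ref{alg:Algorithm 3}. For either Option~1 or Option~2, $\beta$-smoothness of $f$ together with the feasibility of $\conv(\v_1,\ldots,\v_{k+1})$ imply that $\x_{t+1}$ beats the quadratic upper bound at every point in the convex hull. Testing this upper bound at the feasible comparator $\x_t + \rho_t(\v_{k+1} - \x_t)$ yields
\[
f(\x_{t+1}) - f(\x_t) \leq \rho_t\,\nabla f(\x_t)^{\top}(\v_{k+1} - \x_t) + \tfrac{\beta\rho_t^2}{2}\Vert\v_{k+1} - \x_t\Vert^2.
\]

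I would then apply the NEP optimality of $\v_{k+1}$: for every $\u\in\mV$, $\nabla f(\x_t)^{\top}\v_{k+1} + \beta\rho_t\Vert\v_{k+1} - \x_t\Vert^2 \leq \nabla f(\x_t)^{\top}\u + \beta\rho_t\Vert\u - \x_t\Vert^2$. Multiplying by $\rho_t$, combining with the smoothness bound above (using $\tfrac{1}{2}\Vert\v_{k+1}-\x_t\Vert^2 \leq \Vert\v_{k+1}-\x_t\Vert^2$), and averaging with convex weights $\gamma_j$ yields, for any $\u = \sum_j \gamma_j \u_j$ with $\u_j\in\mV$,
\[
f(\x_{t+1}) - f(\x_t) \leq \rho_t\,\nabla f(\x_t)^{\top}(\u - \x_t) + \beta\rho_t^2 \sum_j \gamma_j \Vert\u_j - \x_t\Vert^2.
\]
The crucial choice is $\u = \x^{*} := \Pi_{\mX^*}(\x_t)\in \mF^*$ together with a decomposition $\x^{*} = \sum_j \gamma_j \u_j$, $\u_j\in \mV\cap \mF^*$. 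Convexity of $f$ yields $\nabla f(\x_t)^{\top}(\x^{*} - \x_t) \leq -h_t$ with $h_t := f(\x_t) - f^*$, and quadratic growth gives $\Vert\x^{*} - \x_t\Vert^2 \leq 2h_t/\alpha$. For the remaining quadratic sum I would invoke a refined polytope-geometric decomposition lemma in the spirit of the Garber--Hazan / Garber 2020 framework: the naive Carath\'eodory decomposition only gives $\sum_j \gamma_j \Vert\u_j - \x_t\Vert^2 \leq \DFO^2 + \Vert\x^{*} - \x_t\Vert^2$, which is not strong enough for linear convergence, but the refined lemma exploits the vertex-facet structure of $\mF^*$ encoded via $\psi,\xi$ (hence $\mu = \psi/\xi$) and $\dim\mF^*\leq d$ to produce a bound scaling multiplicatively as $O(d\mu^2 \Vert\x^{*} - \x_t\Vert^2)$ plus a controlled additive tail. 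Putting the pieces together and substituting QG for $\Vert\x^{*} - \x_t\Vert^2$ gives a one-step recursion of the shape
\[
h_{t+1} \leq h_t\bigl(1 - \rho_t + O(\beta\rho_t^2 d\mu^2/\alpha)\bigr) + O(\beta\rho_t^2 \DFO^2).
\]

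The final step is induction on $t$. The base case $t = 1$ is immediate from $C \geq f(\x_1) - f^*$. For the inductive step, plugging in the prescribed $\rho_t$ and using $M \geq \frac{\beta}{\alpha}(4 + 8d\mu^2 \DFO^2)$, the multiplicative factor becomes at most $1 - \rho_t/2$ (the cross term $O(\beta\rho_t^2 d\mu^2/\alpha)$ is swallowed by the $4\beta/\alpha$ part of $M$), while the additive $O(\beta\rho_t^2 \DFO^2)$ is controlled via the $\sqrt{\exp(-(t-1)/(4M))}$ factor in $\rho_t$ together with the $d\mu^2 \DFO^2$ part of $M$. The floor $M \geq 1/2$ cleans up lower-order cross terms and the inequality $1 - x \leq e^{-x}$ then closes the induction to $h_{t+1} \leq C\exp(-t/(4M))$. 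The main obstacle is the refined decomposition in the previous paragraph: without it the additive $\DFO^2$ contribution does not shrink with $h_t$, which precludes both a consistent shrinking $\rho_t$ schedule and the desired linear rate; this is precisely what the factor $d\mu^2\DFO^2$ in $M$ absorbs.
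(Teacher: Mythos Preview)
There is a genuine gap in your argument. Your comparator $\x_t + \rho_t(\v_{k+1}-\x_t)$ produces a one-step inequality of the form $h_{t+1}\leq (1-\rho_t)h_t + O(\beta\rho_t^2\cdot\ldots)$, so the contraction factor is $1-\rho_t$. But the prescribed $\rho_t$ decays like $e^{-(t-1)/(8M)}$, hence $\rho_t\to 0$, and a recursion $h_{t+1}\leq(1-\rho_t)h_t+\text{(additive)}$ with $\rho_t\to 0$ cannot deliver the fixed-rate bound $h_t\leq C e^{-(t-1)/(4M)}$. Concretely, to close the induction you would need $(1-\rho_t)+\text{(additive)}/(Ce^{-(t-1)/(4M)})\leq e^{-1/(4M)}$, i.e.\ $\rho_t\gtrsim 1/(4M)$ for all $t$, which contradicts $\rho_t\to 0$. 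No ``refined decomposition lemma'' on $\sum_j\gamma_j\Vert\u_j-\x_t\Vert^2$ can repair this, because the problem is in the linear (first-order) term, not the quadratic one.

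The paper's proof avoids this by exploiting the fully-corrective step and the \emph{explicitly maintained} decomposition $\x_t=\sum_{i=1}^k\lambda_i\v_i$, which you never use. The comparator is $\y_{t+1}=(1-\eta)\x_t+\eta\p_t$ with a \emph{constant} $\eta=1/(2M)$, where $\p_t=\sum_i(\lambda_i-\Delta_i^*)\v_i+R\,\v_{k+1}\in\conv(\v_1,\ldots,\v_{k+1})$ is built from the decomposition via Lemma~5.5 of \cite{Garber16linearly}; that lemma supplies $\Delta_i^*\in[0,\lambda_i]$ and $\z\in\mF^*$ with $\x^*=\sum_i(\lambda_i-\Delta_i^*)\v_i+R\z$ and $R\leq\min\{\sqrt{d}\,\mu\Vert\x_t-\x^*\Vert,1\}$. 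Thus $\p_t=\x^*-R\z+R\v_{k+1}$, the linear term contributes $\eta\nabla f(\x_t)^\top(\x^*-\x_t)\leq -\eta h_t$ (constant contraction), and the residual $\eta R\,\nabla f(\x_t)^\top(\v_{k+1}-\z)$ together with the quadratic term are controlled by the NEP optimality of $\v_{k+1}$ with weight $\rho_t=\eta R$. This decoupling of the constant $\eta$ from the shrinking $R$ is precisely what drives the linear rate and is missing from your outline. Relatedly, the factor $d\mu^2$ enters through the bound on $R$ (the mass that must be moved), not through a bound on $\sum_j\gamma_j\Vert\u_j-\x_t\Vert^2$ as you suggest.
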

	
	Theorem \ref{thm: linrate alg3} improves upon the state-of-the art complexity bounds for Frank-Wolfe-type methods for general polytopes \cite{Garber16linearly}\footnote{We note that i. while other results on linearly converging FW variants have complexity bounds that are stated using different quantities, such as the \textit{pyramidal width} in \cite{lacoste2015global}, their worst-case complexity bounds do not improve over \cite{Garber16linearly}, and ii. while \cite{GM16} presented a FW variant with improved dependence on the dimension (but without improvement in dependene on $D_{\mK}$), their result applies only to a very restricted family of polytopes and in particular a strict subset of the $0\text{--}1$ polytopes.}   by replacing the dependency on $D_{\mK}^2$ with $D_{\mF^*}^{*2}$. For instance, for the $[0,1]^d$ hypercube, when $\dim\mF^* \ll d$ we  have $D_{\mF^*}^{2}=\Theta(\dim\mF^*) \ll D_{\mK}^2 = \Theta(d)$.
	
	We note  that while Theorem \ref{thm: linrate alg3} relies on a pre-defined sequence of step-sizes $\{\rho_t\}_{t\geq 1}$ which can be difficult to tune in practice, in Theorem \ref{thm:alg3 param} (Section \ref{sec:adaptiveLinear}) we prove that the same rate can be obtained  using an adaptive step-size by applying a logarithmic-scale search on each iteration $t$ to choose a value for $\rho_t$ which gives the largest decrease in function value.
	

	\subsubsection{Improved dependence on dimension under strict complementarity}	
	While for many polytopes  Theorem \ref{thm: linrate alg3} implies significant improvement in the dependence on the dimension whenever $\dim\mF^* \ll d$, still the exponent has explicit dependence on the dimension $d$. 
	
	In a very recent work \cite{Garber20} it was shown that even when the optimal face is low-dimensional, without further assumptions, Frank-Wolfe-type methods (as defined in Definition \ref{def:fwmethod}) cannot avoid such dependence. It was also shown that under a strict complementarity condition (see Assumption \ref{asm: dstrict}), it is possible to improve the explicit dependence on the dimension $d$ to only dependence on the dimension of the optimal face $\mF^*$. The strict complementarity assumption, in the context of analyzing Frank-Wolfe-type methods, was suggested by Wolfe himself  \cite{Wolfe1970}, and it was also instrumental in the early work \cite{GueLat1986} on linearly-converging Frank-Wolfe methods, but not in the more modern ones such as \cite{Garber13, Garber16linearly,lacoste2015global}. \cite{Garber20} motivated this assumption by proving it implies the robustness of the optimal face $\mF^*$ to small perturbations in the objective function $f(\cdot)$. 
	
	\begin{assumption}[strict complementarity]\label{asm: dstrict}
		There exist $\delta>0$ such that for all $\xo\in\mX^*$ and $\v\in\mV$: if $\v\in\mV\setminus\FO$ then $(\v-\xo)^\top\nabla f(\xo)\geq\delta$; otherwise, if $\v\in\mV\cap\FO$ then $(\v-\xo)^\top\nabla f(\xo)=0$.
	\end{assumption}	
	
	\begin{theorem}\label{thm: alg3 lin delta}
		Suppose that in addition to the assumptions of Theorem \ref{thm: linrate alg3}, Assumption~\ref{asm: dstrict} also holds with some parameter $\delta>0$, and let $M_1\geq\max\{\frac{4\beta}{\alpha}+8\beta\DFO^2\max\{2\kappa,\delta^{-1}\},\frac{1}{2}\}$, $M_2\geq\{\frac{4\beta}{\alpha}+16\beta\kappa\DFO^2,\frac{1}{2}\}$, and some $C\geq f(\x_1)-f^*$,  where $\kappa=\frac{2\mu^2\dim\FO}{\alpha}$.\\
		Using Algorithm~\ref{alg:Algorithm 3} with parameters $\rho_t=\frac{\min\{\sqrt{2\max\{2\kappa,\delta^{-1}\}C\exp\big(-\frac{1}{4M_1}(t-1)\big)},1\}}{2M_1}$ for all $t\geq 1$, one has,
		\begin{align}\label{eq: alg3 lin delta res 1}
			\forall t\geq 1:\quad f(\x_t)-f^*\leq C\exp\Big(-\frac{t-1}{4M_1}\Big).
		\end{align}
		Furthermore, in case that $2\kappa\leq\delta^{-1}$ and $\dim\FO>0$ (i.e. $\kappa>0$), denoting $\tau\geq 4M_1\log(\frac{C}{\delta^2\kappa})+1$ and for all $t\geq \tau$, using $\rho_t=\frac{\min\{2\delta\kappa \exp(-\frac{t-\tau}{8M_2}),1\}}{2M_2}$ instead of the above value, one has,
		\begin{align}\label{eq: alg3 lin delta res 2}
			\forall t\geq \tau:\quad f(\x_t)-f^*\leq\delta^2\kappa \exp\Big(-\frac{t-\tau}{4M_2}\Big).
		\end{align}
	\end{theorem}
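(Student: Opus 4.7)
The plan is to extend the argument of Theorem~\ref{thm: linrate alg3} by exploiting Assumption~\ref{asm: dstrict} in two phases. The first phase establishes \eqref{eq: alg3 lin delta res 1}, which holds for every $t\geq 1$, by a refined per-iteration decrease that treats the NEP oracle's output $\v_{k+1}$ differently depending on whether $\v_{k+1}\in\mF^*$ or $\v_{k+1}\in\mV\setminus\mF^*$; this replaces the $d\mu^2$ factor of Theorem~\ref{thm: linrate alg3} with $\max\{2\kappa,\delta^{-1}\}$ in the contraction constant. The second phase, activated only when $2\kappa\leq\delta^{-1}$ and running from the threshold time $\tau$, uses the fact that by the end of the first phase the optimality gap has dropped below $\delta^2\kappa$ to show that the NEP oracle must thereafter select extreme points in $\mV\cap\mF^*$. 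Once the support of $\x_t$ collapses into $\mF^*$, the algorithm effectively runs on the polytope $\mF^*$ alone, and Theorem~\ref{thm: linrate alg3}'s analysis applied to $\mF^*$ produces \eqref{eq: alg3 lin delta res 2}, with $d$ replaced by $\dim\mF^*$ and the $\delta^{-1}$ term removed.

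\smallskip

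For the first phase, I would begin from the $\beta$-smoothness bound $f(\x_{t+1})\leq f((1-\eta)\x_t+\eta\v_{k+1})+O(\eta^2)$ for arbitrary $\eta\in[0,1]$ and retrace the one-step decrease of Theorem~\ref{thm: linrate alg3}. The key ingredient is an upper bound on $(\v_{k+1}-\xo)^\top\nabla f(\x_t)$ obtained from the optimality of $\v_{k+1}$ in the NEP subproblem by using a suitable test vector in $\mV$. When $\v_{k+1}\in\mF^*$, testing against a vertex appearing in a decomposition of the projection of $\x_t$ onto $\mX^*$ reproduces the structure of Theorem~\ref{thm: linrate alg3} but confined to $\mF^*$, so $d\mu^2$ gets replaced by $\kappa$. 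When $\v_{k+1}\in\mV\setminus\mF^*$, Assumption~\ref{asm: dstrict} gives $(\v_{k+1}-\xo)^\top\nabla f(\xo)\geq\delta$; combined with $\beta$-smoothness of $\nabla f$, this forces the NEP oracle's own optimality inequality to produce a per-iteration decrease in which $\DFO^2\delta^{-1}$ plays the role of $\DFO^2 d\mu^2$. Taking the worse of the two cases and substituting the prescribed $\rho_t$ yields $f(\x_{t+1})-f^*\leq(1-1/(4M_1))(f(\x_t)-f^*)$, which iterates to \eqref{eq: alg3 lin delta res 1}; the threshold $\tau\geq 4M_1\log(C/(\delta^2\kappa))+1$ is exactly the first time the right-hand side falls below $\delta^2\kappa$.

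\smallskip

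The main obstacle is the second phase, specifically showing that once $f(\x_t)-f^*\leq\delta^2\kappa$ the NEP oracle's minimizer is forced into $\mV\cap\mF^*$. I would combine three ingredients: quadratic growth ($\dist(\x_t,\mX^*)\leq\sqrt{2(f(\x_t)-f^*)/\alpha}$), $\beta$-smoothness of $\nabla f$, and Assumption~\ref{asm: dstrict}. Let $\xo\in\mX^*$ be the projection of $\x_t$ onto $\mX^*$ and let $\v\in\mV\cap\mF^*$ be a vertex from a decomposition of $\xo$. For any $\v'\in\mV\setminus\mF^*$, the gap $\v'^\top\nabla f(\x_t)+\beta\rho_t\|\v'-\x_t\|^2-\v^\top\nabla f(\x_t)-\beta\rho_t\|\v-\x_t\|^2$ expands to a lower bound of the form $\delta-O(\beta\DFO\sqrt{(f(\x_t)-f^*)/\alpha})-O(\beta\rho_t\DFO^2)$. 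The hypothesis $2\kappa\leq\delta^{-1}$ together with the prescribed $\rho_t=\min\{2\delta\kappa\exp(-(t-\tau)/(8M_2)),1\}/(2M_2)$ is exactly what keeps this gap positive for all $t\geq\tau$, forcing $\v_{k+1}\in\mF^*$; the minimization over $\conv(\v_1,\dots,\v_{k+1})$ in Option~1 or Option~2 then assigns zero weight to any residual non-$\mF^*$ vertices inherited from Phase~1, so the decomposition of $\x_{t+1}$ lies in $\mV\cap\mF^*$. From that point on, Theorem~\ref{thm: linrate alg3} applied inside $\mF^*$ with initial gap $\delta^2\kappa$ yields \eqref{eq: alg3 lin delta res 2}.
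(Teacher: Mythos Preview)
Your approach is structurally different from the paper's, and the second phase contains a genuine gap. The paper never argues that the NEP oracle eventually returns only vertices of $\mF^*$, nor that the convex-hull minimization drops non-$\mF^*$ vertices. Instead, both phases are proved by the \emph{same} induction template as Theorem~\ref{thm: linrate alg3}, simply replacing Lemma~5.5 of \cite{Garber16linearly} by Lemma~2 of \cite{Garber20}: under Assumption~\ref{asm: dstrict}, any $\x_t=\sum_i\lambda_i\v_i$ admits a rewriting $\x_t^*=\sum_i(\lambda_i-\Delta_i^*)\v_i+(\sum_i\Delta_i^*)\z$ with $\z\in\mF^*$ and $\sum_i\Delta_i^*\leq\min\{1,\delta^{-1}h_t+\sqrt{\kappa h_t}\}$. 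One then feeds this $R$ into Lemma~\ref{lem: one step improve alg3} exactly as before. For Phase~1 the bound is coarsened to $R\leq\sqrt{2\max\{2\kappa,\delta^{-1}\}h_t}$; for Phase~2, once $h_t\leq\delta^2\kappa$, the same bound simplifies to $R\leq 2\sqrt{\kappa h_t}$, and the $\delta^{-1}$ disappears without any claim about where $\v_{k+1}$ lands.

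The specific flaw in your Phase~2 is the lower bound on the NEP-objective gap between $\v'\in\mV\setminus\mF^*$ and $\v\in\mV\cap\mF^*$. Writing $(\v'-\v)^\top\nabla f(\x_t)=(\v'-\v)^\top\nabla f(\xo)+(\v'-\v)^\top(\nabla f(\x_t)-\nabla f(\xo))$, the perturbation term is controlled by $\|\v'-\v\|\cdot\beta\,\dist(\x_t,\mX^*)$, and $\|\v'-\v\|$ can be as large as $D_{\mK}$, not $D_{\mF^*}$: $\v'$ is an arbitrary vertex outside $\mF^*$. With $D_{\mK}$ in place of $D_{\mF^*}$, the threshold $h_t\leq\delta^2\kappa$ no longer forces the gap to be positive, and the ambient diameter re-enters the rate---precisely what the theorem is meant to avoid. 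Separately, the assertion that Option~1/Option~2 ``assigns zero weight to any residual non-$\mF^*$ vertices inherited from Phase~1'' is unjustified: the minimizer over $\conv(\v_1,\dots,\v_{k+1})$ need not lie in $\mF^*$ even if $\v_{k+1}\in\mF^*$ and $\x_t$ is near $\mF^*$. The paper's route via Lemma~\ref{lem: one step improve alg3} sidesteps both issues because that lemma tolerates arbitrary $\v_i$'s in the decomposition of $\x_t$; only the auxiliary point $\z$ and the comparison vertex $\w^*$ need to lie in $\mF^*$.
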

	
	Theorem \ref{thm: alg3 lin delta} improves upon \cite{Garber20} by replacing the dependence on $D_{\mK}$ with $D_{\FO}$. Thus, for many polytopes of interest, when $\dim\mF^* \ll d$, Theorem \ref{thm: alg3 lin delta} has no explicit dependence on the ambient dimension $d$, but only on the dimension of the low-dimensional optimal face $\mF^*$. In particular, as discussed, for many polytopes of interest, the NEP oracle could be implemented using a single call to the linear optimization oracle, and as a result, for many $0\text{--}1$ polytopes, under quadratic growth and strict complementarity, we obtain the first linearly convergent algorithm with the same per-iteration oracle-complexity as Frank-Wolfe and with dimension-independent convergence rate whenever $\dim\mF^* \ll d$.
	
	\subsection{A NEP Oracle-based Frank-Wolfe Variant for Stochastic Optimization}\label{sec:stoc results}
	Our last result concerns a standard stochastic optimization setting in which $f(\cdot)$ is given by a stochastic first-order oracle with stochastic gradients upper-bounded in $\ell_2$ norm by some $G>0$. 
	
	Stochastic Frank-Wolfe-type methods have also received notable attention in recent years, including the use of the conditional-gradient sliding approach and various variance reduction methods, see for instance \cite{lan2016conditional,hazan2016variance}. Here however, in order to demonstrate the benefit of our NEP oracle to this setting as well, we only focus on the most basic method known as the Stochastic Frank-Wolfe (SFW) algorithm, which replaces the exact gradient in the Frank-Wolfe method with a mini-batched stochastic gradient (see \cite{hazan2016variance}). Also, here for simplicity we only focus on the case in which $f(\cdot),\mK$ satisfy the quadratic growth property.
	
	Our algorithm, which is a straight-forward adaptation of the SFW algorithm is given as Algorithm~\ref{alg:Algorithm stochastic} (replacing the call to the linear optimization oracle with a call to the NEP oracle) in Appendix \ref{sec:stoc results}.
	
	\begin{theorem}\label{thm:stoc eps convergence}
		Suppose $f,\mK$ satisfy the quadratic growth property with some $\alpha >0$. Using Algorithm~\ref{alg:Algorithm stochastic} with step-size $\eta_t=\frac{2}{t+1}$ and mini-batch sizes that satisfy 
		\begin{align*}
			m_t \geq \max\Big\{\Big(\frac{G(t+1)}{\beta\DK}\Big)^2,\min\bigg\{\Big(\frac{G\DK(t+1)}{\beta\DO^2}\Big)^2,\Big(\frac{\alpha G(t+1)^2}{8\beta^2 \DK}\Big)^2\Big\}\Big\},
		\end{align*} 
		for any $0<\epsilon\leq \frac{16\beta^2\DK^2}{\alpha}$,  expected approximation error $\epsilon$ is achieved after
		$\tilO\big(\max\{\frac{\beta\DO^2}{\epsilon}, \frac{\beta\DK}{\sqrt{\alpha\epsilon}}\}\big)$
		calls to the NEP oracle and
		$\tilO\big(\beta G^2\max\big\{\frac{\DK^2\DO^2}{\epsilon^3}, \frac{\DK}{\alpha^{3/2}\epsilon^{3/2}},\frac{\DK^3}{\alpha^{1/2}\epsilon^{5/2}}\big\}\big)$
		stochastic gradient evaluations, where $\tilO$ suppresses poly-logarithmic terms in $\frac{\beta^2\DK^2}{\alpha\epsilon}$. 
	\end{theorem}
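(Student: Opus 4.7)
The plan is to (i) derive an expected per-iteration descent inequality for Algorithm~\ref{alg:Algorithm stochastic}, (ii) unroll it to an expected-error bound combining a deterministic rate and a stochastic-error tail, (iii) invoke quadratic growth to replace the warm-start term by the $\DO^2$-only leading rate as in Theorem~\ref{thm:alg1 rate}, and (iv) invert the resulting bound to obtain NEP-oracle and stochastic-gradient complexities with the stated schedule $\{m_t\}$.

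First, let $\tnbt$ denote the mini-batched stochastic gradient at iterate $\x_t$ with $m_t$ samples, so by independence $\E\|\tnbt-\nabla f(\x_t)\|^2\leq G^2/m_t$ and hence $\E\|\tnbt-\nabla f(\x_t)\|\leq G/\sqrt{m_t}$. Since $\v_t$ is produced by the NEP oracle on $\tnbt$, for \emph{any} $\u\in\mV$ (and in particular for any $\u\in S^*$),
\begin{align*}
\v_t^\top\tnbt + \tfrac{\beta\eta_t}{2}\|\v_t-\x_t\|^2 \leq \u^\top\tnbt + \tfrac{\beta\eta_t}{2}\|\u-\x_t\|^2.
\end{align*}
Combining this with $\beta$-smoothness applied to $\x_{t+1}=(1-\eta_t)\x_t+\eta_t\v_t$, using convexity of $f$, choosing a convex combination of elements of $S^*$ that coincides with the projection of $\x_t$ onto $\mX^*$ (so that for the chosen $\u$ one has $\|\u-\x_t\|^2\leq 2\dist(\x_t,\mX^*)^2+2\DO^2$ on average), and bounding the cross term $\eta_t(\v_t-\u)^\top(\tnbt-\nabla f(\x_t))$ by $\eta_t\DK G/\sqrt{m_t}$ via Cauchy--Schwarz, I would arrive at a recursion of the form
\begin{align*}
\E[f(\x_{t+1})-f^*] \leq (1-\eta_t)\E[f(\x_t)-f^*] + \beta\eta_t^2\DO^2 + \tfrac{\beta\eta_t^2}{\alpha}\E[f(\x_t)-f^*] + \eta_t\DK\tfrac{G}{\sqrt{m_t}},
\end{align*}
where the third term comes from applying quadratic growth to $\dist(\x_t,\mX^*)^2$ and can be absorbed into the $(1-\eta_t)$ factor for large $t$.

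Unrolling this recursion with $\eta_t=2/(t+1)$ proceeds exactly as in the deterministic Theorem~\ref{thm:alg1 rate}: the deterministic portion produces $O(\beta\DO^2/t)$ plus an $\tilO(1/t^2)$ correction via the bootstrap trick that iteratively replaces $D_L^2$ by $2\alpha^{-1}(f(\x_s)-f^*)$. The accumulated stochastic tail has the form $\sum_{s\leq t}\frac{s}{t(t+1)}\DK G/\sqrt{m_s}$, and the three-way formula for $m_t$ in the theorem statement is engineered so that this tail is dominated by the corresponding deterministic contribution: the $(G(t+1)/(\beta\DK))^2$ floor matches the standard $O(\beta\DK^2/t)$ rate, the $(G\DK(t+1)/(\beta\DO^2))^2$ branch matches the improved $O(\beta\DO^2/t)$ rate, and the $(\alpha G(t+1)^2/(8\beta^2\DK))^2$ branch matches the quadratic-growth $O(1/t^2)$ correction; the outer $\min$ reflects taking whichever of the latter two is cheaper at iteration $t$, while the outer $\max$ with the first floor prevents under-sampling in the early-iteration regime dominated by $\DK$.

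Finally, to extract the claimed complexity I would set the expected error to $\epsilon$ and invert: the deterministic part yields $T=\tilO(\max\{\beta\DO^2/\epsilon,\beta\DK/\sqrt{\alpha\epsilon}\})$, giving the NEP-oracle count. The total gradient complexity is $\sum_{t\leq T}m_t$, which since $m_t$ grows polynomially in $t$ is within a logarithmic factor of $T\cdot m_T$; substituting $T$ and keeping the dominating branch in each regime of $\epsilon$ yields the stated three-way maximum. The condition $\epsilon\leq 16\beta^2\DK^2/\alpha$ ensures that the quadratic-growth regime is the relevant one. The main obstacle will be the careful bookkeeping: verifying that the stochastic tail is simultaneously dominated in all three branches at every iteration $t\leq T$, while correctly tracking constants so that the resulting $\tilO$ factor is precisely a poly-logarithm in $\beta^2\DK^2/(\alpha\epsilon)$; a secondary care is the proper invocation of quadratic growth to justify absorbing the $\tfrac{\beta\eta_t^2}{\alpha}\E[f(\x_t)-f^*]$ term into the leading $(1-\eta_t)$ factor for iterations $t\geq\Omega(\beta/\alpha)$.
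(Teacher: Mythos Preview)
Your high-level plan---per-iteration descent inequality, unrolling, quadratic-growth bootstrap, then inverting for complexity---matches the paper's structure, and your explanation of why the three branches of $m_t$ are calibrated to match the three regimes of the rate is exactly right. However, the technical execution you propose differs from the paper's in one important respect, and the difference bears directly on the concern you flag at the end.

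You propose to apply quadratic growth \emph{inside} the one-step recursion to produce a term $\tfrac{\beta\eta_t^2}{\alpha}\E[f(\x_t)-f^*]$ and then absorb it into the $(1-\eta_t)$ factor for $t\geq\Omega(\beta/\alpha)$. As you note, this leaves the first $O(\beta/\alpha)$ iterations uncontrolled, and patching that would require a separate early-phase argument using the $\DK^2$ bound. The paper avoids this entirely by a cleaner two-stage bootstrap: the per-iteration lemma (the stochastic analogue of Lemma~\ref{lem: alg1 one step improve}) is stated with a free parameter $d_t\geq\E[\dist(\x_t,\mX^*)^2]$ and yields
\[
\hbar_{t+1}\leq (1-\eta_t)\hbar_t + \beta\eta_t^2\min\{\DO^2+d_t,\,\DK^2\},
\]
where the stochastic error $\eta_t\DK G/\sqrt{m_t}$ is \emph{already absorbed} into the quadratic term by the choice of $m_t$ (this is why the floor $(G(t+1)/(\beta\DK))^2$ appears). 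One first runs the recursion with the trivial bound $d_t=\DK^2$ to obtain the crude rate $\hbar_t\leq 4\beta\DK^2/(t+1)$, then feeds this back through quadratic growth to set $d_t=8\beta\DK^2/(\alpha(t+1))$ and re-runs the recursion via Lemma~\ref{lem: new recursion}, yielding the improved rate $4\beta\DO^2/(t+1)+O(\beta^2\DK^2\log t/(\alpha t^2))$ uniformly in $t$, with no early/late case split and no absorption-into-contraction step at all.

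The advantage of the paper's route is that the entire argument is two clean applications of the same recursion lemma, with the stochastic error hidden inside the quadratic term from the outset; your route keeps the stochastic tail as a separate sum and requires a phase transition at $t\asymp\beta/\alpha$, which is workable but messier and makes the constant tracking you worry about harder. If you want to carry out your version, the fix is exactly what the paper does implicitly through the $\min$: for early $t$ bound $\dist(\x_t,\mX^*)^2$ by $\DK^2$ rather than by $2\alpha^{-1}\hbar_t$, so that no absorption is needed there.
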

	
	Let us compare Theorem \ref{thm:stoc eps convergence} with the standard SFW method which requires $O(\beta{}G^2D_{\mK}^4/\epsilon^3)$ stochastic gradients and $O(\beta{}D_{\mK}^2/\epsilon)$ calls to the linear optimization oracle \cite{hazan2016variance}, and the state-of-the-art --- the stochastic conditional gradient sliding (SCGS) method \cite{lan2016conditional} which requires optimal $O(G^2/(\alpha\epsilon))$ stochastic gradients and $O(\beta{}D_{\mK}^2/\epsilon)$ calls to the linear optimization oracle.  The improvement over SFW is quite clear , both in terms of the stochastic oracle and the optimization oracle (at least when $\alpha$ is not trivially small). While SCGS has clear advantage in terms of the stochastic oracle complexity\footnote{This is not surprising since, as opposed to SCGS, SFW cannot leverage the quadratic growth to improve the stochastic gradient complexity.}, we see that when the main concern is the optimization oracle complexity and $D^* \ll D_{\mK}$, already the simple stochastic scheme in Algorithm \ref{alg:Algorithm stochastic} can have significant advantage over SCGS.  
	
	\section{Proof Ideas}
	In this section we describe the main novel components in the analysis of Algorithms \ref{alg:Algorithm 1},\ref{alg:Algorithm 3} ---  novel bounds on the per-iteration error reduction which are independent of the diameter of the set.
	
	\subsection{Error reduction for Algorithm \ref{alg:Algorithm 1}}
	\begin{lemma}\label{lem: alg1 one step improve}
		Using Algorithm~\ref{alg:Algorithm 1} with any choice of step-sizes $\{\eta_t\}_{t=1}^{\infty}$ one has,
		\begin{align*}
			\forall t\geq 1: \quad f(\x_{t+1})-f^*\leq (1-\eta_t)(f(\x_t)-f^*) + \frac{\beta\eta_t^2}{2}(\dist(\x_t, \mX^*)^2+\DO^2).
		\end{align*}
	\end{lemma}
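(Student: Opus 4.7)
The plan is to use the smoothness bound at $\x_{t+1}$, then exploit the optimality of $\v_t$ with respect to the regularized linear problem by comparing it to a suitable convex combination of vertices in $S^*$ whose convex hull contains $\mX^*$. The key trick will be a variance decomposition that splits the resulting quadratic term into a $\dist(\x_t,\mX^*)^2$ piece and a $D^{*2}$ piece, avoiding the usual $D_\mK^2$ factor.

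Step 1: By the choice of $\tilde{\eta}_t$, $f(\x_{t+1})\le f((1-\eta_t)\x_t+\eta_t\v_t)$. Smoothness of $f$ then gives
\begin{align*}
f(\x_{t+1})\;\le\; f(\x_t)+\eta_t(\v_t-\x_t)^\top\nabla f(\x_t)+\tfrac{\beta\eta_t^2}{2}\|\v_t-\x_t\|^2.
\end{align*}
I would pull the last two terms together as $\eta_t\bigl[\v_t^\top\nabla f(\x_t)+\tfrac{\beta\eta_t}{2}\|\v_t-\x_t\|^2\bigr]-\eta_t\x_t^\top\nabla f(\x_t)$, so that the bracketed expression is exactly the objective the NEP oracle minimizes over $\mV$.

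Step 2: Let $\x_t^*$ denote the Euclidean projection of $\x_t$ onto $\mX^*$. Since $\mX^*\subseteq\conv(S^*)$, write $\x_t^*=\sum_i\mu_i\u_i$ with $\u_i\in S^*\subseteq\mV$, $\mu_i\ge0$, $\sum_i\mu_i=1$. Because each $\u_i$ is feasible for the NEP oracle and $\v_t$ is its minimizer,
\begin{align*}
\v_t^\top\nabla f(\x_t)+\tfrac{\beta\eta_t}{2}\|\v_t-\x_t\|^2
\;\le\;\sum_i\mu_i\!\left[\u_i^\top\nabla f(\x_t)+\tfrac{\beta\eta_t}{2}\|\u_i-\x_t\|^2\right].
\end{align*}
The linear part sums to $\x_t^{*\top}\nabla f(\x_t)$. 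For the quadratic part I use the variance identity $\sum_i\mu_i\|\u_i-\x_t\|^2=\|\x_t^*-\x_t\|^2+\sum_i\mu_i\|\u_i-\x_t^*\|^2$. Since $\x_t^*\in\conv(S^*)$ and each $\u_i\in S^*$, the triangle inequality gives $\|\u_i-\x_t^*\|\le\max_{j}\|\u_i-\u_j\|\le D^*$ for every $i$, so the second sum is bounded by $D^{*2}$, while the first equals $\dist(\x_t,\mX^*)^2$.

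Step 3: Substituting back into the smoothness bound yields
\begin{align*}
f(\x_{t+1})\;\le\; f(\x_t)+\eta_t(\x_t^*-\x_t)^\top\nabla f(\x_t)+\tfrac{\beta\eta_t^2}{2}\bigl(\dist(\x_t,\mX^*)^2+D^{*2}\bigr).
\end{align*}
Convexity of $f$ gives $(\x_t^*-\x_t)^\top\nabla f(\x_t)\le f^*-f(\x_t)$, and subtracting $f^*$ from both sides delivers the claim.

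The only nonroutine piece is Step 2, namely the realization that one should not compete against an individual vertex in $S^*$ but against the entire convex combination representing $\x_t^*$, and then apply the variance decomposition to separate the bias (distance to $\mX^*$) from the spread (diameter $D^*$). Everything else is standard smoothness/convexity manipulation and the step-size choice from the algorithm.
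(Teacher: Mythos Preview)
Your proof is correct and follows essentially the same approach as the paper: compare the NEP oracle output against the vertices of $S^*$ and split $\|\u_i-\x_t\|^2$ into a $\dist(\x_t,\mX^*)^2$ part and a $D^{*2}$ part. The only cosmetic difference is that the paper compares against a single $\v\in S^*$, expands $\|\v-\x_t\|^2$ by inserting $\pm\x^*$, and then uses linearity plus $\x^*\in\conv(S^*)$ to pass to $\x^*$, whereas you average over a convex decomposition of $\x_t^*$ directly and invoke the variance identity---these are two packagings of the same algebra.
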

	\begin{proof}
		Fix some iteration $t\geq 1$ and let $\xo$ be the optimal solution closest to $\x_t$. From the $\beta$-smoothness of $f(\cdot)$, the optimality of $\v_t$, and the definition of the set $S^*$ in \eqref{eq:Sstar}, we have that,
		\begin{align*}
			f(\x_{t+1})  &\leq \min_{\v\in{}S^*}f(\x_t) + \eta_t(\v-\x_t)^{\top}\nabla{}f(\x_t) + \frac{\beta\eta_t^2}{2}\Vert{\v  + \x^* - \x^* - \x_t}\Vert^2 \\
			&=\min_{\v\in{}S^*}f(\x_t) + \eta_t(\v-\x_t)^{\top}\nabla{}f(\x_t) + \frac{\beta\eta_t^2}{2}\big(\Vert{\x_t-\x^*}\Vert^2 \\ 
			&+ 2(\x_t-\x^*)^{\top}(\x^*-\v) + \Vert{\v-\x^*}\Vert^2\big) \\
			&\leq \min_{\v\in{}S^*}f(\x_t) + \eta_t(\v-\x_t)^{\top}\nabla{}f(\x_t) \\
			&+\frac{\beta\eta_t^2}{2}\big(\dist(\x_t,\mX^*)^2 + 2(\x_t-\x^*)^{\top}(\x^*-\v) + D^{*2}\big)\\
			&\underset{(a)}{\leq} f(\x_t) + \eta_t(\x^*-\x_t)^{\top}\nabla{}f(\x_t) \\
			&+\frac{\beta\eta_t^2}{2}(\dist(\x_t,\mX^*)^2 + 2(\x_t-\x^*)^{\top}(\x^*-\x^*) + D^{*2}) \\
			&\leq f(\x_t) - \eta_t(f(\x_t)-f^*) + \frac{\beta\eta_t^2}{2}(\dist(\x_t,\mX^*)^2 + D^{*2}),
		\end{align*}
		where (a) holds since $\xo\in\conv(S^*)$ and since $\v^\top\nabla f(\x_t)+\beta\eta_t(\x_t-\xo)^\top(\xo-\v)$ is linear in $\v$.
		
		Subtracting $f^*$ from both sides concludes the proof.
	\end{proof}
	
	\subsection{Error reduction for Algorithm \ref{alg:Algorithm 3}}
	We now proceed to the main lemma in the proof of Theorems \ref{thm: linrate alg3}, \ref{thm: alg3 lin delta} which will allow us to bound the improvement on each iteration of Algorithm~\ref{alg:Algorithm 3}. 
	\begin{lemma}\label{lem: one step improve alg3}
		Fix some iteration $t\geq 1$ of Algorithm~\ref{alg:Algorithm 3} (when either Option 1 or 2 are used) and consider a convex decomposition of $\x_t$ into vertices: $\x_t=\sum_{i=1}^k\lambda_i\v_i$, and fix some $\eta\in[0,1]$. Suppose there exists $R\leq 1$ such that some $\x^*\in\mX^*$ can be written as $\xo=\sum_{i=1}^k(\lambda_i-\Delta^*_i)\v_i + \sum_{i=1}^k\Delta^*_i\z$ for $\Delta^*_i\in[0,\lambda_i], \z\in\FO$ and $\sum_{i=1}^k\Delta_i^* \leq R$. Then, using $\rho_t=\eta R$ one has that,
		\begin{align*}
			f(\x_{t+1})-f^* \leq (1-\eta)(f(\x_t)-f^*) + \eta^2\beta\left({2\Vert{\x_t-\x^*}\Vert^2 + 4R^2\DFO^2}\right).
		\end{align*}
	\end{lemma}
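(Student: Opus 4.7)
The plan is to exploit the fact that, in both Options~1 and~2 of Algorithm~\ref{alg:Algorithm 3}, the next iterate $\x_{t+1}$ is at least as good as any comparator $\y\in\conv(\v_1,\ldots,\v_{k+1})$ with respect to the standard $\beta$-smoothness quadratic upper bound. For Option~1 this follows by construction together with $\beta$-smoothness; for Option~2 it follows because $f(\x_{t+1})\leq f(\y_1)$ where $\y_1$ is the Option~1 iterate, and then $\beta$-smoothness applies to $\y_1$. Hence for any $\y$ in the current convex hull,
$$f(\x_{t+1}) \leq f(\x_t) + (\y-\x_t)^{\top}\nabla f(\x_t) + \tfrac{\beta}{2}\|\y-\x_t\|^2,$$
and the task reduces to selecting $\y$ cleverly.

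The core construction uses the decomposition of $\x^*$ to build the comparator
$$\y := \sum_{i=1}^k(\lambda_i-\eta\Delta_i^*)\v_i + \eta\Delta^*\v_{k+1},\qquad \Delta^*:=\textstyle\sum_i\Delta_i^*.$$
Since $\Delta_i^*\leq\lambda_i$ and $\eta\leq 1$, this is a valid convex combination of $\v_1,\ldots,\v_{k+1}$. Using the hypothesis $\x^*=\sum(\lambda_i-\Delta_i^*)\v_i+\Delta^*\z$, a direct computation gives the identity $\y-\x_t = \eta[(\x^*-\x_t)+\Delta^*(\v_{k+1}-\z)]$. Thus the linear term in the smoothness bound splits into $\eta(\x^*-\x_t)^{\top}\nabla f(\x_t)$, which by convexity is at most $\eta(f^*-f(\x_t))$ and accounts for the main progress $(1-\eta)(f(\x_t)-f^*)$, and a cross term $\eta\Delta^*(\v_{k+1}-\z)^{\top}\nabla f(\x_t)$ that requires the NEP oracle.

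To control the cross term, I would write $\z=\sum_j\mu_j\w_j$ as a convex combination of vertices $\w_j\in\mV\cap\FO$ of the optimal face and average the NEP optimality inequality $\v_{k+1}^{\top}\nabla f(\x_t) + \beta\rho_t\|\v_{k+1}-\x_t\|^2 \leq \w_j^{\top}\nabla f(\x_t) + \beta\rho_t\|\w_j-\x_t\|^2$ with weights $\mu_j$. This yields $(\v_{k+1}-\z)^{\top}\nabla f(\x_t) \leq \beta\rho_t\bigl(\sum_j\mu_j\|\w_j-\x_t\|^2 - \|\v_{k+1}-\x_t\|^2\bigr)$. Since every $\w_j$ and $\x^*$ lie in $\FO$, the averaged distance $\sum_j\mu_j\|\w_j-\x_t\|^2$ is controlled by $\|\x^*-\x_t\|^2$ plus a $\DFO^2$ correction, which is precisely where the diameter of the optimal face---rather than that of $\mK$---enters.

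For the quadratic term $\tfrac{\beta\eta^2}{2}\|(\x^*-\x_t)+\Delta^*(\v_{k+1}-\z)\|^2$, applying $\|a+b\|^2\leq 2\|a\|^2+2\|b\|^2$ produces a clean $\|\x^*-\x_t\|^2$ piece plus a $\Delta^{*2}\|\v_{k+1}-\z\|^2$ piece, and a triangle inequality relative to $\x_t$ converts the latter into $\DFO^2$ and $\|\x^*-\x_t\|^2$ contributions. The choice $\rho_t=\eta R$ together with $\Delta^*\leq R\leq 1$ aligns scales so that the two occurrences of $\|\v_{k+1}-\x_t\|^2$---a negative contribution from the NEP inequality and a positive one from bounding $\|\v_{k+1}-\z\|^2$---combine favorably. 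The main obstacle I anticipate is precisely this bookkeeping: we have no \emph{a priori} bound on $\|\v_{k+1}-\x_t\|^2$ in terms of $\DFO$, so the analysis must carefully exploit the NEP cancellation together with the relations $\Delta^{*2}\leq R\Delta^*\leq R^2$ to obtain exactly the stated constants $2\|\x_t-\x^*\|^2$ and $4R^2\DFO^2$, rather than something looser that would reintroduce $D_\mK$ or a residual $\|\v_{k+1}-\x_t\|^2$ term.
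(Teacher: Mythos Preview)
Your comparator $\y$ and the decomposition $\y-\x_t=\eta[(\x^*-\x_t)+\Delta^*(\v_{k+1}-\z)]$ are exactly right, and coincide with the paper's construction (the paper additionally normalizes to $\sum_i\Delta_i^*=R$ via a separate observation, but this is inessential). Your handling of the linear term and the use of the NEP inequality against vertices of $\FO$ are also fine. The gap is precisely where you suspected: the bookkeeping on $\Vert\v_{k+1}-\x_t\Vert^2$ does not close with the quadratic split you propose.

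Concretely, your route bounds $\Vert\v_{k+1}-\z\Vert^2\le 2\Vert\v_{k+1}-\x_t\Vert^2+2\Vert\x_t-\z\Vert^2$, so the quadratic term contributes $+\,2\beta\eta^2\Delta^{*2}\Vert\v_{k+1}-\x_t\Vert^2$. The NEP inequality, scaled by the cross-term coefficient $\eta\Delta^*$ and using $\rho_t=\eta R$, contributes only $-\,\beta\eta^2 R\Delta^*\Vert\v_{k+1}-\x_t\Vert^2$. The net coefficient is $\beta\eta^2\Delta^*(2\Delta^*-R)$, which is strictly positive whenever $\Delta^*>R/2$ (and equals $\beta\eta^2R^2$ when $\Delta^*=R$). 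Since $\v_{k+1}$ need not lie in $\FO$, this residual term cannot be bounded by $\DFO$ and would force a $D_{\mK}$ dependence, defeating the purpose of the lemma. The relation $\Delta^{*2}\le R\Delta^*$ you invoke is not strong enough; you would need $2\Delta^{*2}\le R\Delta^*$.

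The fix is a different algebraic split of the same vector \emph{before} squaring. Writing
\[
\y-\x_t \;=\; \eta\bigl[(\x^*-\Delta^*\z-(1-\Delta^*)\x_t)\;+\;\Delta^*(\v_{k+1}-\x_t)\bigr],
\]
the inequality $\Vert a+b\Vert^2\le 2\Vert a\Vert^2+2\Vert b\Vert^2$ now yields $\Delta^{*2}\Vert\v_{k+1}-\x_t\Vert^2$ directly (no detour through $\z$, hence no extra factor $2$). This term is then dominated by the NEP contribution since $\Delta^{*2}\le R\Delta^*$, eliminating $\Vert\v_{k+1}-\x_t\Vert^2$ entirely. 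The first piece, $\Vert\x^*-\Delta^*\z-(1-\Delta^*)\x_t\Vert^2=\Vert(1-\Delta^*)(\x^*-\x_t)+\Delta^*(\x^*-\z)\Vert^2$, is bounded by $2(1-\Delta^*)^2\Vert\x_t-\x^*\Vert^2+2\Delta^{*2}\DFO^2$, and together with $\Vert\w^*-\x_t\Vert^2\le 2\Vert\x_t-\x^*\Vert^2+2\DFO^2$ and $R^2+(1-R)^2\le 1$ you recover exactly the constants $2\Vert\x_t-\x^*\Vert^2+4R^2\DFO^2$. This is what the paper does (with $\Delta^*=R$).
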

	This lemma indeed allows us to obtain linear rates since, as it was shown in \cite{Garber16linearly,Garber20}, informally speaking, we can take $R=O(\Vert{\x_t-\xo}\Vert)= O(\sqrt{f(\x_t)-f^*})$.
	\begin{proof}
		First, note that we can assume w.l.o.g. that $\sum_{i=1}^k\Delta_i^* = R$ (see Observation~\ref{obs:linlem help} in Appendix~\ref{sec:linrate alg3}).\\ 
		Now let $\p_t=\xo-R\z+R\v_{k+1}$ and $\y_{t+1}=(1-\eta)\x_t+\eta\p_t$. Note that since $\p_t=\sum_{i=1}^k(\lambda_i-\Delta^*_i)\v_i + \sum_{i=1}^k\Delta^*_i\v_{k+1}$, we have that $\p_t\in\conv(\v_1,\dots,\v_{k+1})$ and thus $\y_{t+1}\in\conv(\v_1,\dots,\v_{k+1})$.\\
		Thus by the $\beta$-smoothness of $f(\cdot)$ and the optimality of $\x_{t+1}$ (as defined in either line 7 or 9 of Algorithm~\ref{alg:Algorithm 3}), we have that, 
		\begin{align}\label{eq:1 linlem}
			f(\x_{t+1}) &\leq
			f(\x_t) + (\y_{t+1}-\x_t)^{\top}\nabla{}f(\x_t) + \frac{\beta}{2}\Vert{\y_{t+1}-\x_t}\Vert^2 \nonumber \\
			&= f(\x_t) + \eta(\xo-R\z+R\v_{k+1}-\x_t)^{\top}\nabla{}f(\x_t) + \frac{\eta^2\beta}{2}\Vert{\xo-R\z+R\v_{k+1}-\x_t}\Vert^2 \nonumber\\
			&\leq f(\x_t) + \eta(\xo-\x_t)^{\top}\nabla{}f(\x_t)+\eta R(\v_{k+1}-\z)^{\top}\nabla{}f(\x_t)\nonumber\\
			&+ \eta^2\beta(\Vert{\xo-R\z-(1-R)\x_t}\Vert^2 + R^2\Vert{\v_{k+1}-\x_t}\Vert^2) \nonumber\\
			&\leq f(\x_t) + \eta(\xo-\x_t)^{\top}\nabla{}f(\x_t) \nonumber\\
			&+ \eta^2\beta(\Vert{\xo-R\z-(1-R)\x_t}\Vert^2 + R^2\Vert{\w^*-\x_t}\Vert^2)
		\end{align}
		where in the last inequality we let $\w^*$ be a vertex in $\mF^*$ such that $\w^{*\top}\nabla{}f(\x_t) \leq \z^\top\nabla{}f(\x_t)$ and we use the fact that since $\rho_t=\eta R$, we have that $\v_{k+1}\in {\argmin}_{\u\in\mV} \u^\top\nabla f(\x_t)+\beta\eta R\Vert{\u-\x_t}\Vert^2$.\\
		Note that 
		\begin{align}\label{eq:2 linlem}
			\Vert{\w^*-\x_t}\Vert^2 \leq 2\Vert{\w^*-\x^*}\Vert^2 + 2\Vert{\x_t-\x^*}\Vert^2 \leq 2\Vert{\x_t-\x^*}\Vert^2 + 2\DFO^2.
		\end{align}
		Also,
		\begin{align}\label{eq:3 linlem}
			&\Vert{\xo-R\z-(1-R)\x_t}\Vert^2 = \Vert{(1-R)\xo-(1-R)\x_t+R\xo-R\z}\Vert^2
			\nonumber\\
			&\leq 2(1-R)^2\Vert{\x_t-\xo}\Vert^2 + 2R^2\DFO^2.
		\end{align}
		
		Plugging-in \eqref{eq:2 linlem},\eqref{eq:3 linlem} into \eqref{eq:1 linlem}, we have
		{
			\begin{align*}
				f(\x_{t+1}) &\leq f(\x_t) + \eta(\x^*-\x_t)^{\top}\nabla{}f(\x_t) \\
				&+ \eta^2\beta\left({2R^2\Vert{\x_t-\x^*}\Vert^2 + 2R^2\DFO^2 + 2(1-R)^2\Vert{\x_t-\xo}\Vert^2 + 2R^2\DFO^2}\right) \\
				&\leq f(\x_t) + \eta(\x^*-\x_t)^{\top}\nabla{}f(\x_t) + \eta^2\beta\left({2\Vert{\x_t-\x^*}\Vert^2 + 4R^2\DFO^2}\right),
		\end{align*}}
		where we have used the fact that $\max_{R\in[0,1]} R^2+(1-R)^2=1$.\\
		Finally, using the gradient inequality and subtracting $f^*$ from both sides we get the proof.
	\end{proof}

	\section{Experiments}\label{sec: experiments}
	In this section we present numerical evidence that demonstrate the benefits of our NEP oracle-based algorithms. Additional results are deferred to Appendix~\ref{sec: apendexpriments}. 
	
	We conducted two experiments. In the first experiment we consider minimizing a random least-squares objective over the unit hypercube $[0,1]^d$. In the second experiment we consider the task of \textit{video co-localization} taken from \cite{lacoste2015global} which takes the form of minimizing a convex quadratic objective over the flow polytope. Note that in both experiments the feasible set is a convex and compact polytope. In both experiments we compare the performances of the original Frank-Wolfe algorithm (FW), the Frank-Wolfe with away-steps (AFW) variant \cite{lacoste2015global},  the fully-corrective Frank-Wolfe variant (FC) \cite{Jaggi13b}\footnote{Fully-corrective Frank-Wolfe is equivalent to using the fully-corrective option in Algorithm~\ref{alg:Algorithm 3} with $\rho_t=0$ i.e. with a linear optimization oracle instead of the NEP oracle.}, our Algorithm~\ref{alg:Algorithm 1}  (NEP FW),  and our Algorithm~\ref{alg:Algorithm 3} with the fully-corrective option (NEP FC). For the video co-localization experiment we also included the pairwise Frank-Wolfe variant (PFW)  \cite{lacoste2015global} and the DICG Frank-Wolfe variant (with line-search) \cite{GM16}.
	
	In both experiments, when implementing the fully corrective variants (FC, NEP FC), we used a constant number of FISTA \cite{beck2009fast} iterations in order to compute the next iterate (i.e., finding an approximate solution to the problem in line 9 of Algorithm~\ref{alg:Algorithm 3}). For our NEP oracle-based algorithms which require the smoothness parameter $\beta$, we set it precisely according to the data (i.e., largest eigenvalue of the Hessian, which is fixed since the objectives are quadratic).
	
	\subsection{Hypercube-constrained least-squares}
	We consider the problem $ \min_{\x\in[0,1]^d} \frac{1}{2}\Vert{\A\x-\b}\Vert^2$, where we take $\A$ to be a $m\times d$ matrix, $m=175, d=200$, with standard Gaussian entries and we set $\b=\A\x^*$, where $\xo$ is constructed by first choosing a random vertex of the hypercube and then changing it's first $5$ entries to $0.5$. Thus, $\xo$, which is also an optimal solution, lies on a face of dimension $5$ of the hypercube. The initialization point for all algorithms is taken to be $\vec{0}$. For both standard Frank-Wolfe and Algorithm~\ref{alg:Algorithm 1} we used the theoretical step-size $\frac{2}{t+1}$ (an alternative is to use line-search but for both variants it seems to give inferior results on this problem). For Algorithm \ref{alg:Algorithm 1} we skipped line 5 since it had no observable impact on performance. For Algorithm~\ref{alg:Algorithm 3}, on each iteration $t$ we took $\rho_t\in\{2^{a/4}\rho_{t-1}\}_{a=-4}^{4}$ which achieves the lowest function value, where initially we set $\rho_{0}=0.5$. For the FC and NEP FC variants, on each iteration $t$ we used 50 iterations of FISTA to compute the next iterate $\x_{t+1}$. Also, the smoothness constant $L_t$ of the FISTA objective was chosen to be fixed throughout all iterations and its was empirically tuned, resulting in $2\cdot 10^4$ for Algorithm~\ref{alg:Algorithm 3} and $5\cdot 10^4$ for FC.
	
	Figure \ref{fig: cubels} shows the results averaged over 50 i.i.d. runs (where in each run we sample a fresh matrix $\A$ and an optimal solution $\x^*$).
	As can be seen, NEP FC significantly outperforms all other algorithms both with respect to the number of iterations and runtime. Also, it can be seen that the simple addition of the NEP oracle in the NEP FW method  leads to substantial improvement in performance compared to the standard Frank-Wolfe method. Moreover,  with respect to runtime,  NEP FW outperforms all other methods except for NEP FC.
	\begin{figure}[h!]
		\centering
		\begin{minipage}[b]{0.49\textwidth}
			\includegraphics[width=\textwidth]{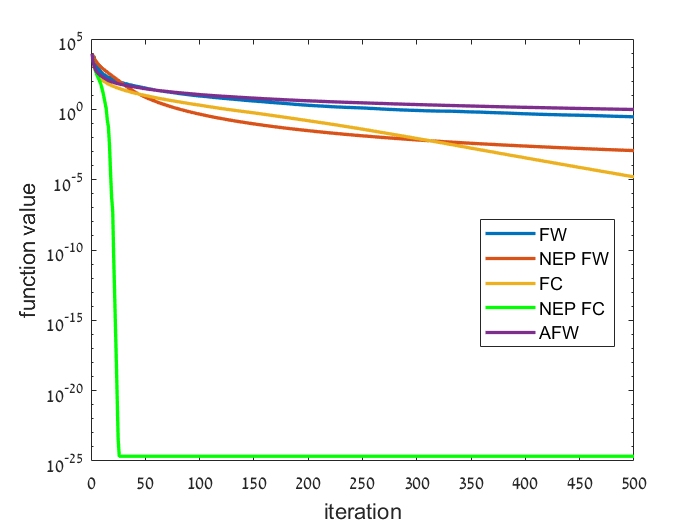}
		\end{minipage}
		\begin{minipage}[b]{0.49\textwidth}
			\includegraphics[width=\textwidth]{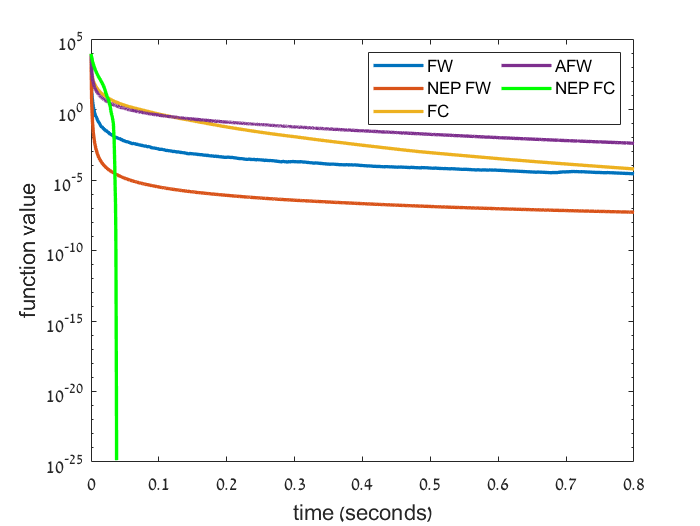}
		\end{minipage}
		\caption{Comparison of Frank-Wolfe variants on the hypercube-constrained least-squares problem. The results are the average of 50 i.i.d. runs.}
		\label{fig: cubels}
	\end{figure}
	\subsection{Video co-localization}
	For our second experiment we use a formulation of the \textit{video co-localization} task as a convex quadratic problem over the flow polytope (which is a 0--1 polytope), a formulation that was originally proposed in \cite{joulin2014efficient}. We  used the same dataset and initialization point used in \cite{lacoste2015global} and \cite{GM16}. The dimension of the problem is $d=660$ and the optimal solution has $66$ non-zero coordinates and no coordinate is equal to $1$, which implies that the optimal face is indeed low-dimensional.
	
	As opposed to the previous experiment, here for both the standard Frank-Wolfe method and Algorithm~\ref{alg:Algorithm 1} we used line-search to set the step-size since for both it gives better results than the fixed $\frac{2}{t+1}$ step-size. For Algorithm~\ref{alg:Algorithm 1} we use the theoretical constant $\eta_t=\frac{2}{t+1}$ for the regularization weight when calling the NEP oracle. For Algorithm~\ref{alg:Algorithm 3} we used $\rho_t=(1/\sqrt{2})^{t+1}$. For both FC and NEP FC variants, on each iteration $t$ we used 10 iterations of FISTA to compute $\x_{t+1}$, where as in the previous experiment, the FISTA smoothness parameter was fixed throughout all iterations and tuned empirically, resulting in a value of $0.25$ for both variants.
	
	Since the optimal value of the objective is not known we find it approximately  using 1000 iteration of DICG \cite{GM16} (which results in  a duality gap of $10^{-15}$).
	
	The results are given in Figure \ref{fig: videoco}. As it can be seen, NEP FC outperforms all other algorithms, both with respect to the number of iterations and running time. Although it may seem that the difference between NEP-FC and FC is not significant, the ratio between the time it takes  FC to reach an approximation error of $10^{-12}$ and the time it takes  NEP FC to reach the same error is $1.21$. Furthermore, it can be seen that the simple addition of the NEP oracle in the NEP FW method leads to substantial improvement in performance compared to the standard Frank-Wolfe method, and that with respect to running time, NEP FW outperforms the linearly-converging variants AFW and PFW.
	\begin{figure}[h!]
		\centering
		\begin{minipage}[b]{0.49\textwidth}
			\includegraphics[width=\textwidth]{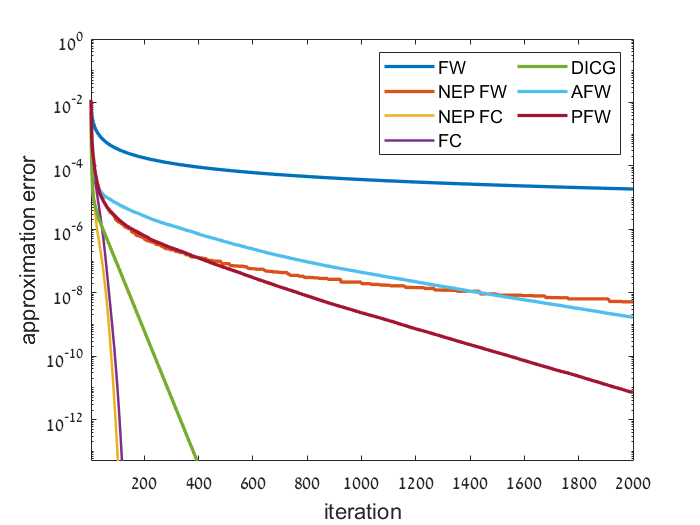}
		\end{minipage}
		\begin{minipage}[b]{0.49\textwidth}
			\includegraphics[width=\textwidth]{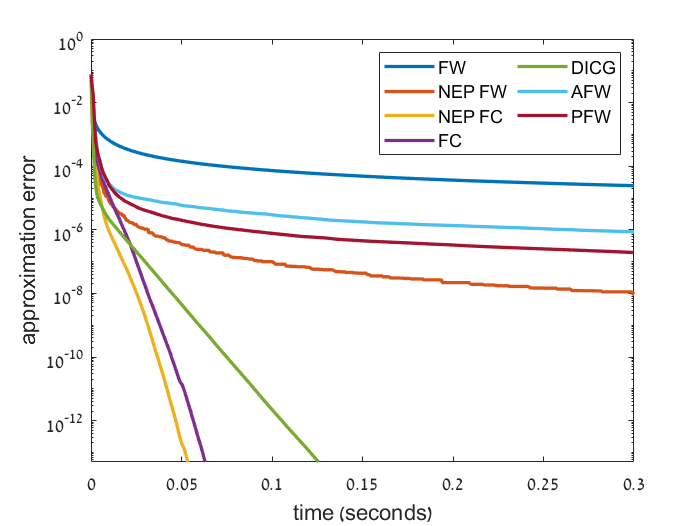}
		\end{minipage}
		\caption{Comparison of Frank-Wolfe variants on the video co-localization problem. The times shown are the averages of 200 runs.}
		\label{fig: videoco}
	\end{figure}
	
	\section*{Acknowledgments}
	This research was supported by the ISRAEL SCIENCE FOUNDATION (grant No. 1108/18).

	\bibliographystyle{unsrt}
	\bibliography{camera_readybib}
	\newpage
	\appendix
	
	\makeatletter
	\newtheorem*{rep@theorem}{\rep@title}
	\newcommand{\newreptheorem}[2]{%
		\newenvironment{rep#1}[1]{%
			\def\rep@title{#2 \ref{##1}}%
			\begin{rep@theorem}}%
			{\end{rep@theorem}}}
	\makeatother
	\newreptheorem{theorem}{Theorem}
	\section{Proof of Theorem~\ref{thm:alg1 rate}}
	For clarity, we first restate the theorem and then prove it.
	\begin{reptheorem}{thm:alg1 rate}
		Using Algorithm~\ref{alg:Algorithm 1} with step-size $\eta_t=\frac{2}{t+1}$ we have
		\begin{align*}
			\forall t\geq 2: \quad f(\x_t)-f^* \leq \frac{2\beta(\DO^2+D_L^2)}{t+1},
		\end{align*}
		where $D_L$ is the diameter of the initial level set (see Footnote \ref{footnote1}).
		
		Moreover, if $f(\cdot)$ has the quadratic growth property over $\mK$ with parameter $\alpha >0$, then 
		\begin{align*}
			\forall t\geq 2:\quad  f(\x_t)-f^*\leq \frac{2\beta\DO^2}{t+1} + \frac{\frac{8\beta^2}{\alpha}(\DO^2+\min\{2\alpha^{-1}(f(\x_1)-f^*),D_L^2\})\log(t)}{t^2}.
		\end{align*}
	\end{reptheorem} 
	\begin{proof}
		Using Lemma~\ref{lem: alg1 one step improve} with our choice of step size $\eta_t$, we have that for all $t\geq 1$,
		\begin{align*}
			f(\x_{t+1})-f^*\leq (1-\frac{2}{t+1})(f(\x_t)-f^*)+\frac{2\beta\DO^2}{(t+1)^2}+\frac{2\beta\dist(\x_t,\mX^*)^2}{(t+1)^2}.
		\end{align*}
		Thus, from Lemma~\ref{lem: new recursion} we have that for any $t\geq 2$,
		\begin{align}
			f(\x_t)-f^*&\leq \frac{1}{t^2}\sum_{k=1}^{t-1} 2\beta(\DO^2+\dist(\x_k,\mX^*)^2)\nonumber\\&\leq\nonumber
			\frac{\frac{1}{t-1}\sum_{k=1}^{t-1}2\beta\DO^2}{t+1} + \frac{1}{t^2}\sum_{k=1}^{t-1} 2\beta\dist(\x_k,\mX^*)^2\\&=
			\frac{2\beta\DO^2}{t+1} + \frac{1}{t^2}\sum_{k=1}^{t-1} 2\beta\dist(\x_k,\mX^*)^2\label{eq:l2use1}\\&\leq
			\frac{2\beta\DO^2}{t+1} + \frac{\frac{1}{t-1}\sum_{k=1}^{t-1}2\beta\dist(\x_k,\mX^*)^2}{t+1}
			\label{eq:l2use2}.
		\end{align}
		Since Algorithm~\ref{alg:Algorithm 1} is a decent method (i.e., the function value never increases from one iteration to the next), and so all iterates as well as the optimal set $\mX^*$ are contained within the initial level set $L$, for any $t\geq 1$ we can bound $\dist(\x_t,\mX^*)\leq D_L$.
		
		Thus, using \eqref{eq:l2use2} we have that,
		\begin{align*}
			\forall t\geq 2: \quad f(\x_t)-f^*\leq \frac{2\beta\DO^2}{t+1} + \frac{2\beta D_L^2}{t+1}.
		\end{align*}
		
		If we additionally assume quadratic growth, we can use again the fact that Algorithm~\ref{alg:Algorithm 1} is a decent method, in order bound $\dist(\x_t,\mX^*)^2\leq \frac{2}{\alpha}(f(\x_1)-f^*)$ which results in the bound
		\begin{align}\label{eq:thm3 fbound}
			\forall t\geq 2: \quad f(\x_t)-f^*\leq \frac{2\beta\DO^2}{t+1} + \frac{2\beta\min\{2\alpha^{-1}(f(\x_1)-f^*), D_L^2\}}{t+1}.
		\end{align}
		Thus, denoting $M=2\beta(\DO^2+\min\{\frac{2}{\alpha}(f(\x_1)-f^*), D_L^2\})$, using again the quadratic growth of $f(\cdot)$ and \eqref{eq:l2use1} we have that for any $t\geq 2$,
		\begin{align}
			f(\x_t)-f^*&\leq
			\frac{2\beta\DO^2}{t+1} + \frac{4\beta}{\alpha t^2}\sum_{k=1}^{t-1}(f(\x_k)-f^*)\underset{(a)}\leq
			\frac{2\beta\DO^2}{t+1} + \frac{4\beta}{\alpha t^2}\sum_{k=1}^{t-1}\frac{M}{k+1} \nonumber\\&=
			\frac{2\beta\DO^2}{t+1} +  \frac{\frac{4\beta}{\alpha}M(\sum_{k=1}^{t}\frac{1}{k}-1)}{t^2}\leq
			\frac{2\beta\DO^2}{t+1} + \frac{\frac{4\beta}{\alpha} M\log(t)}{t^2},\label{eq:thm3 lbound}
		\end{align}
		where (a) follows from the definition of $M$ and the bound in \eqref{eq:thm3 fbound}.
		
		Thus, plugging-in the value of $M$ in \eqref{eq:thm3 lbound} we indeed have that,
		\begin{align*}
			\forall t\geq 2:\quad f(\x_t)-f^*\leq
			\frac{2\beta\DO^2}{t+1} + \frac{\frac{8\beta^2}{\alpha}(\DO^2+\min\{2\alpha^{-1}(f(\x_1)-f^*), D_L^2\})\log(t)}{t^2}.
		\end{align*}
	\end{proof}
	\begin{lemma}\label{lem: new recursion}
		Let $\{a_t,b_t\}_{t=1}^\infty$ be non-negative scalars such that,
		\begin{align*}
			\forall t\geq 1: \quad a_{t+1}\leq (1-\frac{2}{t+1}) a_t + \frac{b_t}{(t+1)^2}.
		\end{align*}
		Then, we have that for any $t\geq 2$,
		\begin{align*}
			a_t\leq \frac{1}{t^2}\sum_{k=1}^{t-1} b_k.
		\end{align*}
		In particular, when $\{b_t\}_{t=1}^\infty$ is upper-bounded by $M>0$ we have that,
		\begin{align*}
			\forall t\geq 2:\quad a_t\leq\frac{M}{t+1}.
		\end{align*}
	\end{lemma}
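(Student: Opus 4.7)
The plan is to reduce the recursion to a simple telescoping form by clearing the denominator $(t+1)^2$ and introducing the quantity $c_t := t^2 a_t$. Multiplying the hypothesis by $(t+1)^2$ rewrites it as $(t+1)^2 a_{t+1} \leq (t+1)(t-1) a_t + b_t = (t^2-1) a_t + b_t$. Since $a_t \geq 0$, we can bound $(t^2-1)a_t \leq t^2 a_t = c_t$, giving the clean recursion $c_{t+1} \leq c_t + b_t$ valid for all $t \geq 1$.

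The second key observation is that at $t=1$ the coefficient $(1 - \frac{2}{t+1})$ equals $0$, which completely suppresses the $a_1$ term from the inequality. Concretely, the base case $t=1$ gives $a_2 \leq \frac{b_1}{4}$, i.e.\ $c_2 \leq b_1$. Then telescoping $c_{t+1} \leq c_t + b_t$ from $t=2$ to $t-1$ yields $c_t \leq c_2 + \sum_{k=2}^{t-1} b_k \leq \sum_{k=1}^{t-1} b_k$, and dividing by $t^2$ produces the desired bound $a_t \leq \frac{1}{t^2}\sum_{k=1}^{t-1} b_k$ for every $t \geq 2$.

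For the ``in particular'' clause, assuming $b_k \leq M$ for all $k$, the sum is bounded by $(t-1)M$, so $a_t \leq \frac{(t-1)M}{t^2}$. The inequality $\frac{t-1}{t^2} \leq \frac{1}{t+1}$ is equivalent to $t^2 - 1 \leq t^2$, which is trivially true, yielding $a_t \leq \frac{M}{t+1}$.

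There is no real obstacle here; the only subtle point is making sure the cancellation at $t=1$ is used to avoid an extraneous $a_1$ term on the right-hand side. The proof will be short (a few lines), consisting of the substitution $c_t = t^2 a_t$, the base-case computation showing $c_2 \leq b_1$, a one-line telescoping argument, and the elementary comparison $\frac{t-1}{t^2} \leq \frac{1}{t+1}$ to conclude the corollary.
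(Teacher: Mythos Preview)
Your proof is correct. The approach differs from the paper's: the paper introduces an auxiliary sequence $\hat a_t$ satisfying the recursion with \emph{equality}, then applies the closed-form solution of a first-order linear recurrence, computing the telescoping product $\prod_{i=2}^{k}\frac{i-1}{i+1}=\frac{2}{k(k+1)}$ to arrive at the (slightly sharper) intermediate bound $a_t \leq \frac{1}{t(t-1)}\sum_{k=1}^{t-1}\frac{k\,b_k}{k+1}$, which is then relaxed to $\frac{1}{t^2}\sum_{k=1}^{t-1}b_k$. Your substitution $c_t=t^2a_t$, together with the harmless weakening $(t^2-1)a_t\leq t^2a_t$, bypasses the product computation entirely and turns the problem into a one-line telescoping sum. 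Both arguments rely on the same base-case observation that the coefficient of $a_1$ vanishes at $t=1$; your route is shorter and more elementary, while the paper's route retains a marginally tighter intermediate inequality that is not actually needed for the stated conclusion.
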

	\begin{proof}
		First we define a sequence $\{\ha_t\}_{t=1}^\infty$ such that $\ha_1=a_1$ and
		$\forall t\geq1: \ha_{t+1}=(1-\frac{2}{t+1}) \ha_t + \frac{b_t}{(t+1)^2}$. Note that $\ha_2=\frac{b_1}{4}$ and that for any $t\geq 1$, $a_t\leq \ha_t$.\\
		Thus, since $\{\ha_t\}_{t=2}^{\infty}$ is of the form $\ha_{t+1}=c_t \ha_t+\bar{b}_t$ (a first-order non-homogeneous recurrence relation) we have that for any $t\geq2$,
		\begin{align*}
			\ha_t = \left(\prod_{i=2}^{t-1} c_i\right)\left(\ha_2 + \sum_{k=2}^{t-1} \frac{\bar{b}_k}{\prod_{i=2}^{k} c_i}\right).
		\end{align*}
		Thus, noting that for any $k\geq 2$,
		\begin{align*}
			\prod_{i=2}^{k} (1-\frac{2}{i+1})=\prod_{i=2}^{k} \frac{i-1}{i+1} = \frac{2}{k(k+1)},
		\end{align*}
		we have that for $t\geq 3$,
		\begin{align*}
			a_t&\leq \ha_t = \frac{2}{t(t-1)}\left(\ha_2 + \sum_{k=2}^{t-1} \frac{k(k+1)}{2}\cdot\frac{b_k}{(k+1)^2}\right)\\&\underset{(a)}=
			\frac{2}{t(t-1)}\sum_{k=1}^{t-1} \frac{k(k+1)}{2}\cdot\frac{b_k}{(k+1)^2}=
			\frac{1}{t(t-1)}\sum_{k=1}^{t-1} \frac{kb_k}{k+1}\\&\leq
			\frac{1}{t(t-1)}\cdot\frac{t-1}{t}\sum_{k=1}^{t-1} b_k=
			\frac{1}{t^2}\sum_{k=1}^{t-1} b_k,
		\end{align*}
		where (a) holds since $\ha_2=\frac{b_1}{4}$ (note that for that reason the bound also holds for $t=2$).
		
		Finally, if $\{b_t\}_{t=1}^\infty$ is upper-bounded by some $M>0$, we have that for any $t\geq 2$,
		\begin{align*}
			a_t \leq \frac{1}{t^2}\sum_{k=1}^{t-1} b_k \leq \frac{(t-1)M}{(t+1)(t-1)} = \frac{M}{t+1}.
		\end{align*}
	\end{proof}
	
	\section{Proof of Theorem~\ref{thm: theorem 1}}
	For clarity, we first restate the theorem and then prove it.
	\begin{reptheorem}{thm: theorem 1}
		Let $\mK=[0,1]^d$, and fix a positive integer $m<d$. Let $\xo=\frac{1}{2}\sum_{i=1}^{m} \e_i$, i.e., $\xo$ has $\frac{1}{2}$ for the first $m$ coordinates and 0 for the rest.
		Now consider the minimization of the function $f(\x) = \frac{1}{2}\lVert \x - \xo \rVert^2$ over $\mK$ starting at $\x_1=\e_{m+1}$. Then, for any Frank-Wolfe-type method there exists a sequence of answers returned by the linear optimization oracle such that for any $t\leq \lfloor \sqrt{d-m-1} \rfloor$, the $t$th iterate of the algorithm $\x_t$ satisfies $f(\x_t)-f^* \geq \frac{1}{4}$. 
	\end{reptheorem}
	\begin{proof}
		Clearly, the unique optimal solution is $\xo$ and $f^*=f(\xo)=0$. Let $k=\lfloor \sqrt{d-m-1} \rfloor$ and let $S_0, S_1\dots,S_k$ be a partition of the last $d-m-1$ coordinates (i.e., of the set $\{m+2\leq i\leq d:i\in\mathbb{N}\}$) such that each $S_i, i=1,\dots,k$ contains exactly $k$ coordinates. Consider now the iterates of some Frank-Wolfe-type method. Observe that for any $t\geq 1$, since the last $d-m$ coordinates of $\nabla f(\x_t)=\x_ t-\xo$ are the same as those of $\x_t$, the last $d-m$ coordinates of a valid answer returned by the linear optimization oracle can contain $0$ in coordinates in which $\x_t$ is non-zero, and either $0$ or $1$ in coordinates in which $\x_t$ is $0$. Thus, a valid sequence of answers returned by the linear optimization oracle on iterations $1 \leq t\leq k$ may set on each iteration $t\in\{1,\dots,k\}$ the oracle's output $\v_t$ to contain $1$ in the coordinates in $S_t$ and $0$ in the rest of the last $d-m$ coordinates (i.e., the coordinates in $(\bigcup_{i=0}^kS_i\setminus{}S_t)\cup\{m+1\}$).
		
		For any vector $\x\in\reals^d$ we let $\x^{(d-m)}$ denote the restriction of $\x$ to the last $d-m$ coordinates. 
		Now, fix some  $t\leq k+1$. Since $\x_t\in\conv(\x_1,\v_1,\dots,\v_k)$, there exist some $\w\in\conv(\v_1,\dots,\v_k)$ and $\gamma\in[0,1]$ such that $\x_t=(1-\gamma)\x_1+\gamma\w$. Note that since $\w^{(d-m)}$ is a convex combination of $k$ orthogonal vectors ($\v_1^{(d-m)},\dots,\v_k^{(d-m)}$), each having Euclidean norm $\sqrt{k}$, we have that $\Vert{\w^{(d-m)}}\Vert^2\geq 1$. Thus, using the fact that $\xo^{(d-m)}=\mathbf{0}$ and that $\w^{(d-m)}$ is orthogonal to $\x_1^{(d-m)}$, we have that,
		\begin{align*}
			f(\x_t)-f^*&= \frac{1}{2}\Vert{\x_t-\xo}\Vert^2\geq \frac{1}{2}\Vert{\x_t^{(d-m)}}\Vert^2 = \frac{1}{2}\Vert{(1-\gamma)\x_1^{(d-m)}+\gamma\w^{(d-m)}}\Vert^2\\&=
			\frac{1}{2}(1-\gamma)^2\Vert{\x_1^{(d-m)}}\Vert^2+\frac{1}{2}\gamma^2\Vert{\w^{(d-m)}}\Vert^2\geq \frac{1}{2}(1-\gamma)^2+\frac{1}{2}\gamma^2\geq \frac{1}{4},
		\end{align*}
		where the last inequality holds since $\min_{\gamma\in[0,1]}(1-\gamma)^2+\gamma^2=\frac{1}{2}$.
		
		Thus, we indeed have that for any $t\leq k=\lfloor \sqrt{d-m-1} \rfloor$, $f(\x_t)-f^*\geq \frac{1}{4}$.
	\end{proof}

	\section{Results and proofs missing from Section~\ref{sec:linres}}
	In Section \ref{sec:linrate alg3} we prove Theorem \ref{thm: linrate alg3}, in Section \ref{sec:adaptiveLinear} we prove that the linear convergence rate in Theorem \ref{thm: linrate alg3} also holds if instead of using a predefined sequence $\{\rho_t\}_{t\geq 1}$, we use an adaptive-step size strategy, and in Section \ref{sec:proofOfLinDelta} we prove Theorem \ref{thm: alg3 lin delta}.
	
	\subsection{Proof of Theorem~\ref{thm: linrate alg3}}\label{sec:linrate alg3}
	We first prove the following technical observation which was used in the proof of Lemma~\ref{lem: one step improve alg3} and then prove the theorem.
	\begin{observation}\label{obs:linlem help}
		Suppose that $\mK$ is a convex and compact polytope and let $\x\in\mK$ which is given by a convex combination of vertices $\x=\sum_{i=1}^k \lambda_i\v_i$. Suppose there exists $R\leq 1$ such that some $\x^*\in\mX^*$ can be written as $\xo=\sum_{i=1}^k(\lambda_i-\Delta^*_i)\v_i + \sum_{i=1}^k\Delta^*_i\z$ for $\Delta^*_i\in[0,\lambda_i], \z\in\FO$ and $\sum_{i=1}^k\Delta_i^* \leq R$. Then, $\xo$ can also be written as, $\xo=\sum_{i=1}^k (\lambda_i-\bar{\Delta}_i)\v_i + \sum_{i=1}^k\bar{\Delta}_i\bar{\z}$ where, $\bar{\Delta}_i\in[0,\lambda_i], \bar{\z}\in\FO$ and $\sum_{i=1}^k\bar{\Delta}_i = R$. 
	\end{observation}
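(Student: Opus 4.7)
The plan is as follows. Let $S := \sum_{i=1}^k \Delta_i^*$, so by hypothesis $S\leq R\leq 1$. If $S=R$ the given decomposition already satisfies the conclusion, so I may assume $S<R$, and in particular $S<1$. The key observation is that $\xo\in\mX^*\subseteq\mF^*$, so $\xo$ itself can serve as an element of $\mF^*$; by convexity of $\mF^*$, any convex combination of $\z$ and $\xo$ remains in $\mF^*$, which supplies a one-parameter family of admissible replacements for $\z$ that I can use to absorb the extra mass $R-S$ without ever leaving $\mF^*$.

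Concretely, I would try the ansatz
\[
\bar\Delta_i := \beta\lambda_i+(1-\beta)\Delta_i^*, \qquad \bar\z := \alpha\z+(1-\alpha)\xo,
\]
with parameters $\beta,\alpha\in[0,1]$ to be fixed. The first formula writes each $\bar\Delta_i$ as a convex combination of two quantities in $[0,\lambda_i]$ (namely $\Delta_i^*$ and $\lambda_i$) and hence automatically yields $\bar\Delta_i\in[0,\lambda_i]$; the second gives $\bar\z\in\mF^*$ directly from convexity of $\mF^*$. The constraint $\sum_i\bar\Delta_i=R$ is equivalent to $\beta+(1-\beta)S=R$, which forces $\beta=(R-S)/(1-S)$; this lies in $[0,1]$ precisely because $S<R\leq 1$. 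It then remains to pick $\alpha$ so that $\xo=\sum_i(\lambda_i-\bar\Delta_i)\v_i+R\bar\z$. Substituting the ansatz into this target identity and eliminating the vertex sum via the hypothesized decomposition $\sum_i(\lambda_i-\Delta_i^*)\v_i=\xo-S\z$ collapses the whole identity to a single scalar equation in $\alpha$, solved by $\alpha = S(1-R)/[R(1-S)]$, which is easily checked to lie in $[0,1]$ under the standing assumption $0\leq S\leq R\leq 1$. The degenerate case $R=0$ forces $S=0$, after which one may take all $\bar\Delta_i=0$ and $\bar\z$ any point of $\mF^*$, and is handled separately.

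The main obstacle is simply identifying the mixing trick --- recognizing that one can grow the total weight on $\mF^*$-points from $S$ to exactly $R$ without leaving $\mF^*$ by blending $\xo$ itself into $\z$, rather than, say, redistributing mass among the $\v_i$'s (which does not preserve the $\mF^*$-membership of the inner point). Once this idea is in hand, the remainder of the argument is a one-parameter choice of $\beta$ followed by a routine algebraic verification that $\alpha$ exists in $[0,1]$ and that the resulting expression indeed reconstructs $\xo$.
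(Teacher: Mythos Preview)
Your proposal is correct and is essentially identical to the paper's own proof: with $S$ playing the role of the paper's $R'$, your $\beta=(R-S)/(1-S)$ is exactly the paper's $\gamma$, your $\bar\Delta_i=\beta\lambda_i+(1-\beta)\Delta_i^*$ matches the paper's $\bar\Delta_i=(1-\gamma)\Delta_i^*+\gamma\lambda_i$, and your $\alpha=S(1-R)/[R(1-S)]$ coincides with the paper's coefficient $(1-\gamma)R'/R$ on $\z$ in $\bar\z$. The only cosmetic difference is that you parametrize $\bar\z$ as $\alpha\z+(1-\alpha)\xo$ and solve for $\alpha$, whereas the paper writes the two coefficients directly and verifies they sum to one via the identity $(1-\gamma)R'+\gamma=R$.
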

	\begin{proof}
		Denote $R'=\sum_{i=1}^k\Delta^*_i$ and assume $R' < R$. Let $\gamma=\frac{R-R'}{1-R'}$ and note that since $R'< R\leq 1$, $\gamma\in[0,1]$. We will show that $\bar{\Delta}_i=(1-\gamma)\Delta_i^*+\gamma\lambda_i$ and $\bar{\z}=\frac{(1-\gamma)R'}{R}\z+\frac{\gamma}{R}\xo$ satisfy the required conditions.
		
		Indeed, since $\gamma\in[0,1]$, we have that $\bar{\Delta}_i\in[0,\lambda_i]$, and since
		\begin{align}\label{eq:obs2 1}
			(1-\gamma)R'+\gamma=\frac{R'(1-R)}{1-R'}+\frac{R-R'}{1-R'}=R,
		\end{align}
		we have that, $\bar{\z} = \frac{(1-\gamma)R'}{R}\z+\frac{\gamma}{R}\xo$ is a convex combination of points in $\FO$ and thus in $\FO$ itself.
		
		Moreover, using \eqref{eq:obs2 1} we have that,
		\begin{align*}
			\sum_{i=1}^k \bar{\Delta}_i &= (1-\gamma)\sum_{i=1}^k \Delta_i^* + \gamma\sum_{i=1}^k \lambda_i = (1-\gamma)R'+\gamma=R.
		\end{align*} 
		Finally, we indeed have that,
		\begin{align*}
			\xo &= (1-\gamma)\xo + \gamma\xo= \sum_{i=1}^k (1-\gamma)(\lambda_i-\Delta_i^*)\v_i+(1-\gamma)R'\z+\gamma\xo \\&=
			\sum_{i=1}^k (\lambda_i-\bar{\Delta}_i)\v_i + R\bar{\z},
		\end{align*}
		completing the proof.
	\end{proof}
	
	Before continuing to the proof of Theorem~\ref{thm: linrate alg3} we first restate it.
	\begin{reptheorem}{thm: linrate alg3}
		Suppose that $\mK$ is a convex and compact polytope and quadratic growth holds with parameter $\alpha > 0$. Let $C\geq f(\x_1)-f^*$ and $M\geq\max\{\frac{\beta}{\alpha}(4+8d\mu^2\DFO^2),\frac{1}{2}\}$, where $\mu=\frac{\psi}{\xi}$. Using Algorithm~\ref{alg:Algorithm 3} with parameter $\rho_t=\frac{\min\{\sqrt{\frac{2Cd\mu^2}{\alpha}\exp\big({-\frac{1}{4M}(t-1)}\big)},1\}}{2M}$  for all $t\geq 1$
		one has,
		\begin{align*}
			\forall t\geq1: \quad f(\x_t)-f^*\leq C\exp\Big(-\frac{t-1}{4M}\Big),
		\end{align*} 
	\end{reptheorem}
	\begin{proof}
		The proof is by induction on $t$. For $t=1$ the bound holds by the definition of the constant $C$.
		
		Now suppose the bound holds for some $t\geq 1$. We will show that it holds for $t+1$. Let $\x_t^*\in \mX^*$ be the closest optimal solution to $\x_t$ and let $\sum_{i=1}^{k}\lambda_i\v_i$ be the decomposition of $\x_t$ used in Algorithm~\ref{alg:Algorithm 3}.\\
		By Lemma 5.5 from \cite{Garber16linearly} there exist $\Delta_i^*\in[0,\lambda_i]$, $i=1,\dots,k$ and $\z\in\FO$, such that $\x_t^*=\sum_{i=1}^k (\lambda_i-\Delta_i^*)\v_i+\sum_{i=1}^k \Delta_i^*\z$, and $\sum_{i=1}^k \Delta_i^*\leq\min\{\sqrt{d}\mu\Vert{\x_t-\x_t^*}\Vert,1\}$, which by the quadratic growth of $f(\cdot)$ together with the induction assumption, implies that $\sum_{i=1}^k \Delta_i^*\leq \min\{\sqrt{\frac{2Cd\mu^2}{\alpha}\exp(-\frac{1}{4M}(t-1))},1\}$, where $M$ is as defined in the theorem. 
		
		Therefore, taking $R=\min\{\sqrt{\frac{2Cd\mu^2}{\alpha}\exp(-\frac{1}{4M}(t-1))},1\}$ and $\eta=\frac{1}{2M}$, we have that $\rho_t=\eta R$ and $\sum_{i=1}^k \Delta_i^*\leq R$ and thus, we can use Lemma~\ref{lem: one step improve alg3} which implies that,
		\begin{align*}
			f(\x_{t+1})-f^*&\leq (1-\eta)(f(\x_t)-f^*)+\eta^2\beta(2\Vert{\x_t-\x_t^*}\Vert^2+4R^2\DFO^2)\\&\underset{(a)}\leq
			(1-\eta)Ce^{-\frac{t-1}{4M}}+\eta^2\beta(\frac{4C}{\alpha}e^{-\frac{t-1}{4M}}+\frac{8Cd\mu^2\DFO^2}{\alpha}e^{-\frac{t-1}{4M}})\\&\underset{(b)}\leq
			(1-\eta)Ce^{-\frac{1}{4M}(t-1)}+M\eta^2Ce^{-\frac{1}{4M}(t-1)}\\&\underset{(c)}=
			(1-\frac{1}{4M})Ce^{-\frac{1}{4M}(t-1)}\underset{(d)}\leq Ce^{-\frac{1}{4M}t},
		\end{align*}
		where (a) holds by the induction assumption and the quadratic growth of $f(\cdot)$, (b) holds due to the definition of $M$, (c) holds since $\eta=\frac{1}{2M}$, and (d) holds since $1-x\leq e^{-x}$.
	\end{proof}
	
	\subsection{Linear convergence with adaptive step-sizes}\label{sec:adaptiveLinear}
	We now prove that, in principle, the linear rate of Theorem \ref{thm: linrate alg3} can be achieved with an adaptive choice of the parameter $\rho_t$, instead of the predefined value listed in Theorem \ref{thm: linrate alg3}. Theorem \ref{thm:alg3 param} demonstrates that it suffices to do a log-scale search over $\rho_t$, i.e., check values $\rho_t = 1, \frac{1}{2}, \frac{1}{4}, \frac{1}{8},\dots$, and take the one which leads to the largest decrease in function value. Note that according to the theorem and since $\mK$ is compact, if the target accuracy we are looking to obtain is some $\epsilon >0$, we need not consider values of $\rho_t$ below some $O(\epsilon)$. Thus, the overall number of search steps will be logarithmic in $1/\epsilon$, and the overall increase in complexity due to the use of such adaptive step-sizes will be an $O(\log{1/\epsilon})$ factor. 
	
	\begin{theorem}\label{thm:alg3 param}
		Suppose that all of the assumptions of Theorem~\ref{thm: linrate alg3} hold and denote $h_1=f(\x_1)-f^*$ and $M^*=\max\{\frac{\beta}{\alpha}(4+8d\mu^2\DFO^2),\frac{1}{2}\}$. Consider running Algorithm~\ref{alg:Algorithm 3} and for all $t\geq 1$ define $\rho_t^*=\frac{\min\{\sqrt{d}\mu\cdot\dist(\x_t,\mX^*),1\}}{2M^*}$. Suppose that for all $t\geq 1$ we use some value $\rho_t\in[\frac{\rho_t^*}{2},\rho_t^*]$. Then, with these parameters one has,
		\begin{align*}
			\forall t\geq1: f(\x_t)-f^*\leq h_1\exp\Big(-\frac{3(t-1)}{16M^*}\Big).
		\end{align*} 
	\end{theorem}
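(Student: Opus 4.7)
The plan is to mimic the inductive argument in the proof of Theorem~\ref{thm: linrate alg3}, but instead of using the a priori exponential upper bound on $\dist(\x_t,\mX^*)$ to choose the step-size, I work directly with the ideal value $\rho_t^*$ built from the actual distance and show that any $\rho_t\in[\rho_t^*/2,\rho_t^*]$ suffices, losing only a small constant factor in the rate.

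First, I fix an iteration $t$ and let $\x_t^*\in\mX^*$ be the closest optimal solution to $\x_t$. Invoking Lemma~5.5 of \cite{Garber16linearly} on the decomposition $\x_t=\sum_{i=1}^{k}\lambda_i\v_i$ maintained by Algorithm~\ref{alg:Algorithm 3} (exactly as in the proof of Theorem~\ref{thm: linrate alg3}), I obtain coefficients $\Delta_i^*\in[0,\lambda_i]$ and $\z\in\FO$ satisfying the decomposition hypothesis of Lemma~\ref{lem: one step improve alg3} with $\sum_{i=1}^k \Delta_i^*\leq R$, where I \emph{define} $R:=\min\{\sqrt{d}\mu\cdot\dist(\x_t,\mX^*),1\}$. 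With this choice, $R=2M^*\rho_t^*$, so the assumption $\rho_t\in[\rho_t^*/2,\rho_t^*]$ translates into $\rho_t=\eta_t R$ for some $\eta_t\in[1/(4M^*),1/(2M^*)]$, and since $M^*\geq 1/2$ we have $\eta_t\in[0,1]$.

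Next, I apply Lemma~\ref{lem: one step improve alg3} with these $R$ and $\eta_t$ to get
\begin{align*}
f(\x_{t+1})-f^*\leq (1-\eta_t)(f(\x_t)-f^*)+\eta_t^2\beta\bigl(2\Vert \x_t-\x_t^*\Vert^2+4R^2\DFO^2\bigr).
\end{align*}
Quadratic growth gives $\Vert \x_t-\x_t^*\Vert^2\leq 2(f(\x_t)-f^*)/\alpha$, and directly from the definition of $R$ one has $R^2\leq d\mu^2\dist(\x_t,\mX^*)^2\leq 2d\mu^2(f(\x_t)-f^*)/\alpha$. Substituting and collecting terms yields
\begin{align*}
f(\x_{t+1})-f^*\leq \Bigl(1-\eta_t+\eta_t^2\cdot\tfrac{\beta}{\alpha}(4+8d\mu^2\DFO^2)\Bigr)(f(\x_t)-f^*)\leq (1-\eta_t+\eta_t^2 M^*)(f(\x_t)-f^*),
\end{align*}
by the definition of $M^*$.

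The only remaining task, and the main (minor) obstacle, is to show that the worst-case contraction factor over $\eta_t\in[1/(4M^*),1/(2M^*)]$ is $1-3/(16M^*)$. The quadratic $g(\eta)=1-\eta+\eta^2 M^*$ is convex with minimum at $\eta=1/(2M^*)$, so on this interval it is maximized at the left endpoint $\eta=1/(4M^*)$, giving $g(1/(4M^*))=1-1/(4M^*)+1/(16M^*)=1-3/(16M^*)$. Hence for every $t$,
\begin{align*}
f(\x_{t+1})-f^*\leq \Bigl(1-\tfrac{3}{16M^*}\Bigr)(f(\x_t)-f^*)\leq \exp\Bigl(-\tfrac{3}{16M^*}\Bigr)(f(\x_t)-f^*),
\end{align*}
and a straightforward induction on $t$ starting from $f(\x_1)-f^*\leq h_1$ gives the claimed bound. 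No nontrivial new tools are needed beyond Lemma~\ref{lem: one step improve alg3} and quadratic growth; the proof is essentially an accounting adjustment that trades a factor of $4/3$ in the exponent for robustness of the step-size selection.
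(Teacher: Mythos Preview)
Your proposal is correct and follows essentially the same argument as the paper's proof: you choose $R=\min\{\sqrt{d}\mu\cdot\dist(\x_t,\mX^*),1\}$ via Lemma~5.5 of \cite{Garber16linearly}, set $\eta_t=\rho_t/R\in[1/(4M^*),1/(2M^*)]$, apply Lemma~\ref{lem: one step improve alg3}, use quadratic growth to collapse the error term into $(1-\eta_t+\eta_t^2 M^*)(f(\x_t)-f^*)$, and then bound the contraction factor over the interval. Your explicit analysis of the quadratic $g(\eta)=1-\eta+\eta^2 M^*$ on the interval is slightly more detailed than the paper's one-line justification, but the structure and tools are identical.
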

	\begin{proof}
		Fix some iteration $t\geq 1$. Using the same notation and arguments as in the proof of Theorem~\ref{thm: linrate alg3}, by Lemma 5.5 from \cite{Garber16linearly} we can take $R=\min\{\sqrt{d}\mu\Vert{\x_t-\x_t^*}\Vert,1\}$ (note that $\Vert{\x_t-\x_t^*}\Vert=\dist(\x_t,\mX^*)$ as $\x_t^*$ denotes the closest optimal solution to $\x_t$). Thus, taking $\eta=\rho_t/R$ and noting that by the definition of $\rho_t$ in the theorem, $\eta\in[\frac{1}{4M^*},\frac{1}{2M^*}]$, by Lemma~\ref{lem: one step improve alg3} we have that,
		\begin{align*}
			f(\x_{t+1})-f^*&\leq (1-\eta)(f(\x_t)-f^*)+\eta^2\beta(2\Vert{\x_t-\x_t^*}\Vert^2+4R^2\DFO^2)\\&\underset{(a)}\leq
			(1-\eta)(f(\x_t)-f^*) + \eta^2\beta(\frac{4}{\alpha}+\frac{8d\mu^2\DFO^2}{\alpha})(f(\x_t)-f^*)
			\\&\underset{(b)}=(1-\eta+M^*\eta^2)(f(\x_t)-f^*)\underset{(c)}\leq (1-\frac{3}{16M^*})(f(\x_t)-f^*),
		\end{align*}
		where (a) follows from the quadratic growth of $f(\cdot)$ and the definition of $R$, (b) follows from the definition of $M^*$, and (c) holds since $\eta\in[\frac{1}{4M^*},\frac{1}{2M^*}]$.
		
		Using the fact that $1-x\leq e^{-x}$, we have that,
		\begin{align*}
			f(\x_t)-f^*\leq h_1(1-\frac{3}{16M^*})^{t-1}\leq h_1\exp\Big(-\frac{3(t-1)}{16M^*}\Big).
		\end{align*} 
	\end{proof}
	\subsection{Proof of Theorem~\ref{thm: alg3 lin delta}}\label{sec:proofOfLinDelta}
	The proof goes along the same lines as the proof of Theorem \ref{thm: linrate alg3}, but this time, since  we assume $\delta$-strict complementarity, we can use Lemma 2 from \cite{Garber20} instead of Lemma 5.5 from \cite{Garber16linearly} in order to upper-bound the amount of probability mass we need to move from the convex decomposition of the point $\x_t$ in order to reach the closest optimal solution $\xo$, yielding a dimension-independent linear convergence rate.
	
	For clarity, before continuing to the proof of the theorem we first restate it.
	\begin{reptheorem}{thm: alg3 lin delta}
		Suppose that in addition to the assumptions of Theorem \ref{thm: linrate alg3}, Assumption~\ref{asm: dstrict} also holds with some parameter $\delta>0$, and let $C\geq f(\x_1)-f^*$, $M_1\geq\max\{\frac{4\beta}{\alpha}+8\beta\DFO^2\max\{2\kappa,\delta^{-1}\},\frac{1}{2}\}$ and  $M_2\geq\{\frac{4\beta}{\alpha}+16\beta\kappa\DFO^2,\frac{1}{2}\}$, where $\kappa=\frac{2\mu^2\dim\FO}{\alpha}$.\\
		Using Algorithm~\ref{alg:Algorithm 3} with parameters $\rho_t=\frac{\min\{\sqrt{2\max\{2\kappa,\delta^{-1}\}C\exp\big(-\frac{1}{4M_1}(t-1)\big)},1\}}{2M_1}$ for all $t\geq 1$, one has,
		\begin{align}
			\forall t\geq 1:\quad f(\x_t)-f^*\leq C\exp\Big(-\frac{t-1}{4M_1}\Big). \tag{\ref{eq: alg3 lin delta res 1}}
		\end{align}
		Furthermore, in case that $2\kappa\leq\delta^{-1}$ and $\dim\FO>0$ (i.e. $\kappa>0$), denoting $\tau\geq 4M_1\log(\frac{C}{\delta^2\kappa})+1$ and for all $t\geq \tau$, using $\rho_t=\frac{\min\{2\delta\kappa \exp(-\frac{t-\tau}{8M_2}),1\}}{2M_2}$ instead of the above value, one has,
		\begin{align*}
			\forall t\geq \tau:\quad f(\x_t)-f^*\leq\delta^2\kappa \exp\Big(-\frac{t-\tau}{4M_2}\Big). \tag{\ref{eq: alg3 lin delta res 2}}
		\end{align*}
	\end{reptheorem}
	\begin{proof}
		We first prove the rate in \eqref{eq: alg3 lin delta res 1}  by induction on $t$. For $t=1$ the bound holds by the definition of the constant $C$.
		
		Now, suppose the bound holds for some $t\geq 1$. We will show that it holds for $t+1$. Let $\x_t^*\in \mX^*$ be the closest optimal solution to $\x_t=\sum_{i=1}^{k}\lambda_i\v_i$ and denote $h_t=f(\x_t)-f^*$. Then, by Lemma 2 from \cite{Garber20}, there exist $\Delta_i^*\in[0,\lambda_i], i=1,\dots,k$, and $\z\in\FO$, such that $\x_t^*=\sum_{i=1}^k (\lambda_i-\Delta_i^*)\v_i+\sum_{i=1}^k \Delta_i^*\z$ and $\sum_{i=1}^k \Delta_i^*\leq \min\{1, \delta^{-1}h_t+\sqrt{\kappa h_t}\}$.
		
		Since when $\delta^{-1}h_t > \sqrt{\kappa h_t}$ we have that $\sum_{i=1}^k \Delta_i^*\leq \min\{1,2\delta^{-1}h_t\}\leq\min\{1,\delta^{-1/2}\sqrt{2h_t}\}$, and otherwise we have that $\sum_{i=1}^k \Delta_i^*\leq \min\{1,2\sqrt{\kappa h_t}\}$, by taking the maximum of the two we have that $\sum_{i=1}^k \Delta_i^*\leq\min\{1,\max\{\sqrt{2\kappa},\delta^{-1/2}\}\sqrt{2h_t}\}$. Thus, taking $\eta=\frac{1}{2M_1}$ and $R=\min\{1,\sqrt{2\max\{2\kappa,\delta^{-1}\}C\exp({-\frac{t-1}{4M_1}})}\}$, where $M_1$ is as defined in the theorem, we have that $\rho_t=\eta R$ and $\sum_{i=1}^k \Delta_i^*\leq R$, and therefore we can use Lemma~\ref{lem: one step improve alg3} which implies that,
		\begin{align*}
			h_{t+1}&\leq (1-\eta)h_t+\eta^2\beta(2\Vert{\x_t-\x_t^*}\Vert^2+4R^2\DFO^2)\\&\underset{(a)}\leq
			(1-\eta)Ce^{-\frac{1}{4M_1}(t-1)}+\eta^2\beta(\frac{4}{\alpha}Ce^{-\frac{1}{4M_1}(t-1)}+8\max\{2\kappa,\delta^{-1}\}\DFO^2 Ce^{-\frac{1}{4M_1}(t-1)})\\&\underset{(b)}\leq
			(1-\eta)Ce^{-\frac{1}{4M_1}(t-1)} + M_1\eta^2 Ce^{-\frac{1}{4M_1}(t-1)}\\&\underset{(c)}=
			(1-\frac{1}{4M_1})Ce^{-\frac{1}{4M_1}(t-1)}\underset{(d)}\leq e^{-\frac{1}{4M_1}t},
		\end{align*}
		where (a) holds by the induction assumption and the quadratic growth of $f(\cdot)$, (b) holds due to the definition of $M_1$, (c) holds since $\eta=\frac{1}{2M_1}$, and (d) holds since $1-x\leq e^{-x}$.
		
		Now we turn to prove the rate in \eqref{eq: alg3 lin delta res 2}. The proof goes along the same lines as the proof of \eqref{eq: alg3 lin delta res 1}, but now we have  from \eqref{eq: alg3 lin delta res 1} that for $\tau$ as defined in the theorem, it holds that $h_{\tau}\leq \delta^2\kappa$, and since when $h_t\leq \delta^2\kappa$ we have that $\sum_{i=1}^k \Delta_i^*\leq \min\{1, \delta^{-1}h_t+\sqrt{\kappa h_t}\}\leq \min\{1, 2\sqrt{\kappa h_t}\}$, we can replace $C$ with $\delta^2\kappa$ and $\max\{2\kappa,\delta^{-1}\}$ with $2\kappa$ in the above arguments, and get the desired bound for all $t\geq\tau$.
	\end{proof}
	
	\section{Results and proofs missing from Section~\ref{sec:stoc results}}
	Our NEP Oracle-based Stochastic Frank-Wolfe variant is given in Algorithm \ref{alg:Algorithm stochastic}
	\begin{algorithm}
		\caption{Stochastic Frank-Wolfe with a Nearest Extreme Point Oracle}
		\label{alg:Algorithm stochastic}
		\begin{algorithmic}[1]
			\STATE Input: a sequence of step-sizes $\{\eta_t\} \subset [0,1]$, a sequence of mini-batch sizes $\{m_t\}\subset\mathbb{N}$
			\STATE $\x_1 \leftarrow$ some arbitrary point in $\mV$
			\FOR {$t=1 \dotsc $}
			\STATE Compute $\tnbt$ as the average of $m_t$ iid unbiased estimators of $\nabla f(\x_t)$
			\STATE $\v_t \leftarrow \argmin_{\v\in \mV} \v^{\top}\tnbt + \frac{\beta\eta_t}{2}\lVert \x_t - \v \rVert^2$
			\STATE $\x_{t+1} \leftarrow (1-\eta_t)\x_t + \eta_t \v_t$
			\ENDFOR
		\end{algorithmic}
	\end{algorithm}
	
	In Section \ref{sec:stochasticRateProof} we prove a theorem on the convergence rate of Algorithm \ref{alg:Algorithm stochastic}. Then, in Section \ref{sec:stochasticComplexProof} we prove Theorem \ref{thm:stoc eps convergence}.

	Throughout this section, for all $t\geq 1$ we denote $\hbar_t=\E[f(\x_t)-f^*]$.
	
	\subsection{Convergence rate of Algorithm~\ref{alg:Algorithm stochastic}}\label{sec:stochasticRateProof}
	\begin{theorem}\label{thm: stoc improved rate} 
		Assume $f(\cdot),\mK$ satisfy the quadratic growth property with parameter $\alpha >0$. Then, using Algorithm~\ref{alg:Algorithm stochastic} with step-size $\eta_t=\frac{2}{t+1}$ and mini-batch sizes that satisfy
		\begin{align*}
			m_t \geq \max\left\{\left(\frac{G(t+1)}{\beta\DK}\right)^2,\min\left\{\left(\frac{G\DK(t+1)}{\beta\DO^2}\right)^2,\left(\frac{\alpha G(t+1)^2}{8\beta^2 \DK}\right)^2\right\}\right\},
		\end{align*}
		one has,
		\begin{align*}
			\forall t\geq 2: \quad \E[f(\x_t)-f^*] \leq \frac{4\beta{}D^{*2}}{t+1}+\frac{\frac{32\beta^2}{\alpha}{}D_{\mK}^2\log(t)}{t^2}.
		\end{align*}
	\end{theorem}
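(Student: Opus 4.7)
The plan is to mimic the deterministic analysis from Lemma~\ref{lem: alg1 one step improve} while carefully tracking the stochastic-gradient error, and then bootstrap the resulting recursion via Lemma~\ref{lem: new recursion}.

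First I would derive a stochastic one-step improvement. Using $\beta$-smoothness of $f$ applied to $\x_{t+1}=(1-\eta_t)\x_t+\eta_t\v_t$, followed by the optimality of $\v_t$ for the NEP objective with $\tnbt$ (comparing against an arbitrary $\v\in S^*$), and then the identity $\lVert\v-\x_t\rVert^2=\lVert\v-\xo\rVert^2+2(\xo-\v)^\top(\x_t-\xo)+\lVert\x_t-\xo\rVert^2$ with $\xo\in\mX^*$ the closest optimum to $\x_t$, gives (exactly along the lines of the proof of Lemma~\ref{lem: alg1 one step improve}, now with $\tnbt$ in place of $\nabla f(\x_t)$)
\begin{align*}
f(\x_{t+1})-f^* &\leq (1-\eta_t)(f(\x_t)-f^*)+\frac{\beta\eta_t^2}{2}\bigl(\dist(\x_t,\mX^*)^2+\DO^2\bigr) \\
&\quad +\eta_t(\xo-\v_t)^\top\bigl(\tnbt-\nabla f(\x_t)\bigr),
\end{align*}
where the extra last term comes from writing $(\xo-\x_t)^\top\tnbt=(\xo-\x_t)^\top\nabla f(\x_t)+(\xo-\x_t)^\top(\tnbt-\nabla f(\x_t))$ and adding it to the error $\eta_t(\v_t-\x_t)^\top(\nabla f(\x_t)-\tnbt)$ that appears when relating $\v_t$'s NEP-optimality to $f(\x_{t+1})$.

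Next I would take expectations and handle the noise term. Since $\lVert\xo-\v_t\rVert\leq\DK$ deterministically and $\E\lVert\tnbt-\nabla f(\x_t)\rVert^2\leq G^2/m_t$ by the standard mini-batch variance bound, Cauchy-Schwarz yields $\E[(\xo-\v_t)^\top(\tnbt-\nabla f(\x_t))]\leq \DK G/\sqrt{m_t}$. Using the quadratic growth property to bound $\E[\dist(\x_t,\mX^*)^2]\leq 2\hbar_t/\alpha$, and denoting $\hbar_t=\E[f(\x_t)-f^*]$, the recursion becomes, with $\eta_t=2/(t+1)$,
\begin{align*}
\hbar_{t+1}\leq\Bigl(1-\tfrac{2}{t+1}\Bigr)\hbar_t+\frac{1}{(t+1)^2}\underbrace{\Bigl[2\beta\DO^2+\tfrac{4\beta}{\alpha}\hbar_t+\tfrac{2(t+1)\DK G}{\sqrt{m_t}}\Bigr]}_{=:\,b_t}.
\end{align*}
Now Lemma~\ref{lem: new recursion} gives $\hbar_t\leq\frac{1}{t^2}\sum_{k=1}^{t-1}b_k$ for all $t\geq 2$, so it remains to bound the three contributions to $\sum b_k$.

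The key technical step is a bootstrap. To control the term $\frac{4\beta}{\alpha t^2}\sum_{k=1}^{t-1}\hbar_k$ inside the sum, I first obtain a crude $O(1/k)$ bound on $\hbar_k$ that uses only compactness of $\mK$: replacing $\dist(\x_t,\mX^*)^2$ by $\DK^2$ in the one-step recursion, and using the constraint $m_t\geq (G(t+1)/(\beta\DK))^2$ from the max to force $\eta_t\DK G/\sqrt{m_t}\leq 2\beta\DK^2/(t+1)^2$, gives $b_k\leq O(\beta\DK^2)$ and hence $\hbar_k\leq O(\beta\DK^2/(k+1))$ by another application of Lemma~\ref{lem: new recursion}. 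Plugging this back yields $\frac{4\beta}{\alpha t^2}\sum_{k=1}^{t-1}\hbar_k\leq O(\beta^2\DK^2\log(t)/(\alpha t^2))$. For the noise contribution, the second part of the mini-batch bound, $m_t\geq\min\{(G\DK(t+1)/(\beta\DO^2))^2,(\alpha G(t+1)^2/(8\beta^2\DK))^2\}$, implies $\frac{2(k+1)\DK G}{\sqrt{m_k}}\leq 2\beta\DO^2+\frac{16\beta^2\DK^2}{\alpha(k+1)}$, so its contribution to $\frac{1}{t^2}\sum b_k$ is at most $\frac{2\beta\DO^2}{t+1}+\frac{16\beta^2\DK^2\log(t)}{\alpha t^2}$. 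Combining with the leading $\frac{1}{t^2}\sum 2\beta\DO^2\leq\frac{2\beta\DO^2}{t+1}$ produces the claimed bound $\frac{4\beta\DO^2}{t+1}+\frac{32\beta^2\DK^2\log(t)}{\alpha t^2}$ (after consolidating constants).

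The main obstacle is the coupled noise term $\eta_t(\xo-\v_t)^\top(\tnbt-\nabla f(\x_t))$: because $\v_t$ itself depends on $\tnbt$, the clean unbiasedness trick fails and one must pay an $O(\DK G/\sqrt{m_t})$ bias rather than the smaller $O(G^2/(\beta m_t))$ that strong-convex analyses sometimes exploit. This is exactly why the mini-batch schedule is delicately tuned, with the $\max\{\cdot,\min\{\cdot,\cdot\}\}$ form: the outer max ensures the crude bootstrap step works, while the inner min allows the final error term to be absorbed either into the $\DO^2/t$ leading term or the $\DK^2\log(t)/t^2$ lower-order term, depending on which regime is cheaper. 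Careful bookkeeping of these two regimes under a single recursion is where most of the work lies.
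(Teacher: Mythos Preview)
Your proposal is correct and follows essentially the same two-pass bootstrap as the paper: first derive a crude $O(\beta D_\mK^2/t)$ bound using only compactness and the outer part of the mini-batch constraint, then feed it back via quadratic growth into the refined one-step recursion and sum with Lemma~\ref{lem: new recursion}. The only minor difference is that the paper packages the one-step bound as a separate lemma (Lemma~\ref{lem: stoc one iter improve}) which, in addition to the $S^*$-comparison you use, also compares the NEP vertex against the linear minimizer $\u^*\in\argmin_{\v\in\mV}\tnbt^\top\v$ to obtain $\min\{D_\mK^2,\DO^2+d_t\}$ rather than your $\DO^2+\dist(\x_t,\mX^*)^2$; this sharper crude step is what delivers the exact constant $32$ in the $\log(t)/t^2$ term, whereas your version as written would produce a slightly larger constant.
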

	We first prove a lemma on the improvement (in expectation) on each iteration of Algorithm~\ref{alg:Algorithm stochastic}, and then we prove the theorem.
	\begin{lemma}\label{lem: stoc one iter improve}
		Fix some iteration $t\geq 1$ of Algorithm~\ref{alg:Algorithm stochastic} and let  $d_t\geq \E[\dist(\x_t,\mX^*)^2]$. Then, using a step-size $\eta_t\in [0,1]$ and mini-batch size $m_t\geq(\frac{2G D_{\mK}}{\beta\eta_t\min\{\DO^2+d_t,\DK^2\}})^2$ one has,
		\begin{align*}
			\hbar_{t+1}\leq (1-\eta_t)\hbar_t + \beta\eta_t^2(\min\{\DO^2+d_t,\DK^2\}).
		\end{align*}
	\end{lemma}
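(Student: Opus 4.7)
The plan is to mirror the proof of Lemma~\ref{lem: alg1 one step improve} for the deterministic Algorithm~\ref{alg:Algorithm 1}, while carefully tracking the error incurred by the NEP oracle consulting $\tnbt$ rather than $\nabla f(\x_t)$. Fix some iteration $t\geq 1$ and let $\xo\in\mX^*$ denote an optimal solution closest to $\x_t$.

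The starting point is $\beta$-smoothness together with the NEP optimality of $\v_t$ w.r.t.\ $\tnbt$: for every $\v\in\mV$,
\[\eta_t\v_t^{\top}\tnbt + \tfrac{\beta\eta_t^2}{2}\|\v_t-\x_t\|^2 \leq \eta_t\v^{\top}\tnbt + \tfrac{\beta\eta_t^2}{2}\|\v-\x_t\|^2.\]
Averaging this inequality over any convex decomposition $\xo = \sum_i\mu_i\v_i$ into vertices $\v_i\in\mV$, then swapping $\tnbt$ for $\nabla f(\x_t)$ (producing an additive error term), and finally combining with smoothness and with the convexity of $f$ to handle the $\nabla f(\x_t)^{\top}(\xo-\x_t)$ piece, yields after subtracting $f^*$
\begin{align*}
f(\x_{t+1}) - f^* &\leq (1-\eta_t)(f(\x_t)-f^*) + \tfrac{\beta\eta_t^2}{2}\sum_i\mu_i\|\v_i-\x_t\|^2 \\
&\qquad + \eta_t(\v_t-\xo)^{\top}(\nabla f(\x_t)-\tnbt).
\end{align*}

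Two independent bounds for the quadratic term follow from the freedom to choose the decomposition. Bound A, using vertices of $S^*$ (legitimate since $\xo\in\conv(S^*)$) together with the expansion $\|\v_i-\x_t\|^2 = \|\x_t-\xo\|^2 + 2(\x_t-\xo)^{\top}(\xo-\v_i) + \|\v_i-\xo\|^2$ and the cross-term cancellation of Lemma~\ref{lem: alg1 one step improve}, gives $\sum_i\mu_i\|\v_i-\x_t\|^2 \leq \dist(\x_t,\mX^*)^2 + \DO^2$. Bound B, using any decomposition into vertices of $\mV$ and the trivial $\|\v_i-\x_t\|\leq\DK$, gives $\sum_i\mu_i\|\v_i-\x_t\|^2 \leq \DK^2$. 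Both hold pointwise and the stochastic error is identical in both (since $\xo$ is the only ``comparison point'' appearing there), so their minimum gives the quadratic contribution $\tfrac{\beta\eta_t^2}{2}\min\{\dist(\x_t,\mX^*)^2+\DO^2,\DK^2\}$ plus a single shared error term.

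The stochastic error is then handled by conditioning on $\x_t$. Since $\xo$ depends only on $\x_t$, unbiasedness of the mini-batch yields $\E[\xo^{\top}(\nabla f(\x_t)-\tnbt)\mid\x_t]=0$. Cauchy-Schwarz with $\|\v_t-\xo\|\leq\DK$ then gives $|\E[\eta_t(\v_t-\xo)^{\top}(\nabla f(\x_t)-\tnbt)\mid\x_t]| \leq \eta_t\DK\,\E[\|\nabla f(\x_t)-\tnbt\|\mid\x_t]$. Averaging $m_t$ i.i.d.\ stochastic gradients of norm at most $G$ yields variance at most $G^2/m_t$, so by Jensen $\E[\|\nabla f(\x_t)-\tnbt\|\mid\x_t]\leq G/\sqrt{m_t}$, and the assumed lower bound on $m_t$ caps the expected error at $\tfrac{\beta\eta_t^2\min\{\DO^2+d_t,\DK^2\}}{2}$. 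Finally, taking full expectation and applying Jensen to the concave function $x\mapsto\min\{x+\DO^2,\DK^2\}$ (with $\E[\dist(\x_t,\mX^*)^2]\leq d_t$) gives $\E[\min\{\dist(\x_t,\mX^*)^2+\DO^2,\DK^2\}]\leq\min\{d_t+\DO^2,\DK^2\}$, so the two $\tfrac{1}{2}$-contributions sum to the claimed recursion. The subtle point is to ensure the comparison vertex in the stochastic error is deterministic given $\x_t$; using $\xo$ in both bounds (rather than two distinct data-dependent vertices) is what allows the two bounds to combine into a single minimum with a clean common error.
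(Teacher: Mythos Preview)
Your proof is correct and follows essentially the same route as the paper's: both arrive at the pointwise inequality
\[
f(\x_{t+1}) - f^* \leq (1-\eta_t)(f(\x_t)-f^*) + \tfrac{\beta\eta_t^2}{2}\min\{\dist(\x_t,\mX^*)^2+\DO^2,\DK^2\} + \eta_t(\v_t-\xo)^{\top}(\nabla f(\x_t)-\tnbt),
\]
then bound the error term by $\eta_t\DK\cdot G/\sqrt{m_t}$ via Cauchy--Schwarz and Jensen, and pass the expectation through the $\min$. The only cosmetic difference is that the paper picks single comparison vertices ($\u^*\in\argmin_{\v\in\mV}\tnbt^\top\v$ for the $\DK^2$ bound and a suitable $\w^*\in S^*$ for the $\DO^2+\dbar_t^2$ bound), whereas you average the NEP inequality over a full convex decomposition of $\xo$; by linearity these are equivalent. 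Your remark that $\E[\xo^{\top}(\nabla f(\x_t)-\tnbt)\mid\x_t]=0$ is true but unused, since you immediately apply Cauchy--Schwarz to the whole $(\v_t-\xo)$ term anyway.
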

	\begin{proof}
		Let $\xo$ be the closest optimal solution to $\x_t$. We will upper-bound $\tnbt^\top\v_t+\frac{\beta\eta_t}{2}\Vert{\v_t-\x_t}\Vert^2$ in two ways.\\
		On one hand, consider an extreme point $\u^*\in{\argmin}_{\v\in\mV}\tnbt^\top\v$. By the optimality of $\v_t$ we have that,
		\begin{align*}
			\tnbt^\top\v_t+\frac{\beta\eta_t}{2}\Vert{\v_t-\x_t}\Vert^2\leq \tnbt^\top\u^*+\frac{\beta\eta_t}{2}\Vert{\u^*-\x_t}\Vert^2\leq
			\tnbt^\top\xo+\frac{\beta\eta_t\DK^2}{2}. 
		\end{align*}
		On the other hand, since $\xo$ is a convex combination of extreme points from the set $S^*$, as in the proof of Lemma~\ref{lem: alg1 one step improve},
		there must exist some $\w^* \in S^*$ such that,
		\begin{align*}
			\tnbt^\top\w^*+\beta\eta_t(\w^*-\xo)^\top(\xo-\x_t)\leq \tnbt^\top\xo.
		\end{align*}
		Thus, denoting $\dbar_t=\dist(\x_t,\mX^*)$, by the optimality of $\v_t$ we have that,
		\begin{align*}
			&\tnbt^\top\v_t+\frac{\beta\eta_t}{2}\Vert{\v_t-\x_t}\Vert^2\leq \tnbt^\top\w^*+\frac{\beta\eta_t}{2}\Vert{\w^*-\x_t}\Vert^2=\\&\tnbt^\top\w^*+\frac{\beta\eta_t}{2}\Vert{\w^*-\xo+\xo-\x_t}\Vert^2=\\&
			\tnbt^\top\w^* + \frac{\beta\eta_t}{2}\Vert{\w^*-\xo}\Vert^2+\beta\eta_t(\w^*-\xo)^\top(\xo-\x_t)+\frac{\beta\eta_t}{2}\Vert{\x_t-\xo}\Vert^2\leq\\&\tnbt^\top\xo + \frac{\beta\eta_t\DO^2}{2} + \frac{\beta\eta_t \dbar_t^2}{2}.
		\end{align*}
		Thus, denoting $\Mbar_t=\min\{\DO^2+\dbar_t^2,\DK^2\}$, we have that,
		\begin{align}\label{eq: stoc v_t bound}
			\tnbt^\top\v_t+\frac{\beta\eta_t}{2}\Vert{\v_t-\x_t}\Vert^2\leq \tnbt^\top\xo + \frac{\beta\eta_t\Mbar_t}{2}.
		\end{align}
		Now, from the $\beta$-smoothness of $f(\cdot)$ we have that,
		\begin{align*}
			f(\x_{t+1})-f(\x_t)&\leq \nabla f(\x_t)^\top(\x_{t+1}-\x_t) + \frac{\beta}{2}\Vert \x_{t+1}-\x_t\Vert^2 \\&=
			\eta_t\nabla f(\x_t)^\top(\v_t-\x_t)+\frac{\beta\eta_t^2}{2}\lVert \v_t-\x_t\rVert^2\\&=\eta_t(\nabla f(\x_t)-\tnbt)^\top(\v_t-\x_t) + \eta_t\tnbt^\top(\v_t-\x_t) + \frac{\beta\eta_t^2}{2}\Vert \v_t-\x_t\Vert^2\\&\underset{(a)}\leq
			\eta_t(\nabla f(\x_t)-\tnbt)^\top(\v_t-\x_t) + \eta_t\tnbt^\top(\xo-\x_t) + \frac{\beta\eta_t^2\Mbar_t}{2}\\&=
			\eta_t\nabla f(\x_t)^\top(\xo-\x_t) + \eta_t(\nabla f(\x_t)-\tnbt)^\top(\v_t-\xo) + \frac{\beta\eta_t^2\Mbar_t}{2}\\&\underset{(b)}\leq
			\eta_t(f(\xo)-f(\x_t))+\eta_t D_{\mK}\Vert{\tnbt-\nabla f(\x_t)}\Vert + \frac{\beta\eta_t^2\Mbar_t}{2},
		\end{align*}
		where (a) follows from \eqref{eq: stoc v_t bound}, and (b) follows from the convexity of $f(\cdot)$ and the Cauchy-Schwarz inequality.\\
		Now,  using Jensen's inequality we have that $\E[\Vert{\tnbt-\nabla f(\x_t)}\Vert]\leq \sqrt{\E[\Vert{\tnbt-\nabla f(\x_t)}\Vert^2]}\leq \frac{G}{\sqrt{m_t}}$, which by our choice of $m_t$, is at most $\frac{\beta\eta_t\min\{\DO^2+d_t,\DK^2\}}{2D_{\mK}}$. Thus, taking expectation and noting that $\E[\Mbar_t]=\E[\min\{\DO^2+\dbar_t^2,\DK^2\}]\leq {\min\{\DO^2+d_t,\DK^2\}}$, we have that
		\begin{align*}
			\hbar_{t+1}-\hbar_t\leq -\eta_t \hbar_t + \beta\eta_t^2(\min\{\DO^2+d_t,\DK^2\}).
		\end{align*}
		Finally, rearranging, we have that
		\begin{align*}
			\hbar_{t+1}\leq (1-\eta_t)\hbar_t + \beta\eta_t^2(\min\{\DO^2+d_t,\DK^2\}).
		\end{align*}
	\end{proof}
	
	\begin{proof}[Proof of Theorem~\ref{thm: stoc improved rate}]
		First note that for any $t\geq 1$,
		\begin{align*}
			m_t\geq \left(\frac{G(t+1)}{\beta\DK}\right)^2=\left(\frac{2G\DK}{\beta\eta_t\min\{\DK^2, \DO^2+\DK^2\}}\right)^2.
		\end{align*}
		Thus, we can use Lemma~\ref{lem: stoc one iter improve} with the trivial bound $\DK^2\geq\E[\dist(\x_t,\mX^*)^2]$ which, with our choice of step size $\eta_t$, implies that,
		\begin{align*}
			\forall t\geq1: \quad \hbar_{t+1}\leq (1-\frac{2}{t+1})\hbar_t + \frac{4\beta\DK^2}{(t+1)^2},
		\end{align*}
		which by Lemma~\ref{lem: new recursion} (with $M=4\beta\DK^2$) gives us the bound,
		\begin{align}\label{eq:stoc SFW bound}
			\forall t\geq 2:\quad  \hbar_t\leq \frac{4\beta\DK^2}{t+1}.
		\end{align}
		Now, for any $t\geq 1$, let $d_t=\frac{8\beta\DK^2}{\alpha(t+1)}$. By the quadratic growth of $f(\cdot)$ and \eqref{eq:stoc SFW bound}, we have that $\E[\dist(\x_t,\mX^*)^2]\leq d_t$. Thus, since the mini-batch sizes satisfy for all $t\geq 1$,
		\begin{align*}
			m_t&\geq \max\left\{\left(\frac{G(t+1)}{\beta\DK}\right)^2,\min\left\{\left(\frac{G\DK(t+1)}{\beta\DO^2}\right)^2,\left(\frac{\alpha G(t+1)^2}{8\beta^2 \DK}\right)^2\right\}\right\}\\&=
			\max\left\{\left(\frac{2G\DK}{\beta\eta_t\DK^2}\right)^2,
			\min\left\{\left(\frac{2G\DK}{\beta\eta_t\DO^2}\right)^2,\left(\frac{ 2G\DK}{\beta\eta_t d_t}\right)^2\right\}\right\}\\&=
			\left(\frac{2G\DK}{\beta\eta_t\min\{\DK^2,\max\{\DO^2,d_t\}\}}\right)^2\geq
			\left(\frac{2G\DK}{\beta\eta_t\min\{\DK^2,\DO^2+d_t\}}\right)^2,
		\end{align*}
		we can use Lemma~\ref{lem: stoc one iter improve} with $d_t=\frac{8\beta\DK^2}{\alpha(t+1)}$, which, by our choice of step size $\eta_t$ implies that,
		\begin{align*}
			\forall t\geq 1:\quad  \hbar_{t+1}\leq (1-\frac{2}{t+1})\hbar_t+ \frac{4\beta}{(t+1)^2}\big(\DO^2+\frac{8\beta\DK^2}{\alpha(t+1)}\big).
		\end{align*}
		Thus, by Lemma~\ref{lem: new recursion}, for any $t\geq 2$, we have that,
		\begin{align*}
			\hbar_{t}&\leq \frac{1}{t^2}\sum_{k=1}^{t-1} 4\beta\big(\DO^2+\frac{8\beta\DK^2}{\alpha(k+1)}\big)\leq
			\frac{\frac{1}{t-1}\sum_{k=1}^{t-1}4\beta\DO^2}{t+1}+\frac{32\beta^2\DK^2}{\alpha}\cdot\frac{\sum_{k=1}^{t}\frac{1}{k}-1}{t^2}\\&\leq
			\frac{4\beta\DO^2}{t+1}+\frac{\frac{32\beta^2}{\alpha}\DK^2\log(t)}{t^2},
		\end{align*}
		concluding the proof.
	\end{proof}
	
	\subsection{Proof of Theorem~\ref{thm:stoc eps convergence}}\label{sec:stochasticComplexProof}
	For clarity, we first restate the theorem and then prove it.
	\begin{reptheorem}{thm:stoc eps convergence}
		Suppose $f,\mK$ satisfy the quadratic growth property with some $\alpha >0$. Using Algorithm~\ref{alg:Algorithm stochastic} with step-size $\eta_t=\frac{2}{t+1}$ and mini-batch sizes that satisfy 
		\begin{align*}
			m_t= \max\Big\{\Big(\frac{G(t+1)}{\beta\DK}\Big)^2,\min\bigg\{\Big(\frac{G\DK(t+1)}{\beta\DO^2}\Big)^2,\Big(\frac{\alpha G(t+1)^2}{8\beta^2 \DK}\Big)^2\Big\}\Big\},
		\end{align*} 
		for any $0<\epsilon\leq \frac{16\beta^2\DK^2}{\alpha}$,  expected approximation error $\epsilon$ is achieved after
		$\tilO\big(\max\{\frac{\beta\DO^2}{\epsilon}, \frac{\beta\DK}{\sqrt{\alpha\epsilon}}\}\big)$
		calls to the NEP oracle and
		$\tilO\big(\beta G^2\max\big\{\frac{\DK^2\DO^2}{\epsilon^3}, \frac{\DK}{\alpha^{3/2}\epsilon^{3/2}},\frac{\DK^3}{\alpha^{1/2}\epsilon^{5/2}}\big\}\big)$
		stochastic gradient evaluations, where $\tilO$ suppresses poly-logarithmic terms in $\frac{\beta^2\DK^2}{\alpha\epsilon}$. 
	\end{reptheorem}
	\begin{proof}
		Let $0<\epsilon\leq \frac{16\beta^2\DK^2}{\alpha}$ and note that for $t\geq 8\beta\DO^2\epsilon^{-1}$, it holds that $\frac{4\beta\DO^2}{t+1}\leq \frac{\epsilon}{2}$.\\
		Now, let $M>0$ such that $\sqrt{\frac{2M}{\epsilon}\log(\frac{2M}{\epsilon})}\geq 2$, and observe that for all $t\geq \sqrt{\frac{2M}{\epsilon}\log(\frac{2M}{\epsilon})}$ it holds that,
		\begin{align*}
			\frac{M\log(t)}{t^2}\underset{(a)}\leq \frac{M\log(\sqrt{\frac{2M}{\epsilon}\log(\frac{2M}{\epsilon})})}{\frac{2M}{\epsilon}\log(\frac{2M}{\epsilon})}\underset{(b)}\leq\frac{M\log(\frac{2M}{\epsilon})}{\frac{2M}{\epsilon}\log(\frac{2M}{\epsilon})}=\frac{\epsilon}{2},
		\end{align*}
		where (a) holds since $\frac{M\log(t)}{t^2}$ is monotonically decreasing in $t$ for $t\geq 2$, and (b) holds since $\log(\frac{2M}{\epsilon})\leq \frac{2M}{\epsilon}$. 
		
		Thus, taking $M=\frac{32\beta^2\DK^2}{\alpha}$, note that since $0<\epsilon\leq \frac{16\beta^2\DK^2}{\alpha}=\frac{M}{2}$, we have that, $\sqrt{\frac{2M}{\epsilon}\log(\frac{2M}{\epsilon})}\geq \sqrt{4\log(4)}>2$, and thus, we have that,
		\begin{align*}
			\forall t\geq \sqrt{\frac{64\beta^2 D_{\mK}^2}{\alpha\epsilon}\log\left(\frac{64\beta^2D_{\mK}^2}{\alpha\epsilon}\right)}: \quad \frac{\frac{32\beta^2}{\alpha}\DK^2\log(t)}{t^2}\leq \frac{\epsilon}{2}.
		\end{align*}
		Thus, denoting $T=\Big\lceil\max\left\{\frac{8\beta\DO^2}{\epsilon}, \sqrt{\frac{64\beta^2 D_{\mK}^2}{\alpha\epsilon}\log\left(\frac{64\beta^2D_{\mK}^2}{\alpha\epsilon}\right)}\right\}\Big\rceil$, using Theorem~\ref{thm: stoc improved rate} we have that for all $t\geq T$,
		\begin{align*}
			\hbar_t\leq \frac{4\beta\DO^2}{t+1}+\frac{\frac{32\beta^2}{\alpha}\DK^2\log(t)}{t^2}\leq \frac{\epsilon}{2}+\frac{\epsilon}{2} = \epsilon.
		\end{align*}
		Thus, we indeed reach $\epsilon$ expected approximation error in $O\big(\max\{\frac{\beta\DO^2}{\epsilon}, \frac{\beta\DK\log^{1/2}(\frac{\beta^2\DK^2}{\alpha\epsilon})}{\sqrt{\alpha\epsilon}}\}\big)$ calls to the linear oracle.
		
		Now, let $n_{grad}$ be the number of stochastic gradient used until iteration $T$. Note that we have that,
		\begin{align}
			n_{grad}&=\sum_{t=1}^{T-1} m_t \leq Tm_{T-1} = T\max\bigg\{\left(\frac{GT}{\beta\DK}\right)^2,\min\bigg\{\left(\frac{G\DK T}{\beta\DO^2}\right)^2,\left(\frac{\alpha GT^2}{8\beta^2 \DK}\right)^2\bigg\}\bigg\}\nonumber\\&=
			\max\bigg\{\frac{G^2T^3}{\beta^2\DK^2},\min\bigg\{\frac{G^2\DK^2 T^3}{\beta^2\DO^4},\frac{\alpha^2 G^2T^5}{64\beta^4 \DK^2}\bigg\}\bigg\}.\label{eq:thm7 smp_bound}
		\end{align}
		Thus, when $\frac{8\beta\DO^2}{\epsilon}\geq \sqrt{\frac{64\beta^2 D_{\mK}^2}{\alpha\epsilon}\log(\frac{64\beta^2D_{\mK}^2}{\alpha\epsilon})}$ we have that, $T=\big\lceil\frac{8\beta\DO^2}{\epsilon}\big\rceil=O(\frac{\beta\DO^2}{\epsilon})$, and thus using \eqref{eq:thm7 smp_bound}, we have that,
		\begin{align*}
			n_{grad}&\leq \max\bigg\{\frac{G^2T^3}{\beta^2\DK^2},\frac{G^2\DK^2 T^3}{\beta^2\DO^4}\bigg\}\underset{(a)}= \frac{G^2\DK^2 T^3}{\beta^2\DO^4} = O\left(\frac{G^2\DK^2}{\beta^2\DO^4}\cdot \frac{\beta^3\DO^6}{\epsilon^3}\right)\\&=O\left(\frac{\beta G^2\DK^2\DO^2}{\epsilon^3}\right),
		\end{align*}
		where (a) holds since $\DO\leq\DK$.
		
		Otherwise, we have that, $T=\left\lceil\sqrt{\frac{64\beta^2 D_{\mK}^2}{\alpha\epsilon}\log(\frac{64\beta^2D_{\mK}^2}{\alpha\epsilon})}\right\rceil=O\big(\frac{\beta\DK\log^{1/2}(\frac{\beta^2\DK^2}{\alpha\epsilon})}{\sqrt{\alpha\epsilon}}\big)$, and thus, using \eqref{eq:thm7 smp_bound} we have that,
		\begin{align*}
			n_{grad}&\leq \max\bigg\{\frac{G^2T^3}{\beta^2\DK^2},\frac{\alpha^2 G^2T^5}{64\beta^4 \DK^2}\bigg\}\\&=O\left(\max\bigg\{\frac{G^2}{\beta^2\DK^2}\cdot \frac{\beta^3\DK^3\log^{3/2}(\frac{\beta^2\DK^2}{\alpha\epsilon})}{\alpha^{3/2}\epsilon^{3/2}},\frac{\alpha^2 G^2}{\beta^4 \DK^2}\cdot\frac{\beta^5\DK^5\log^{5/2}(\frac{\beta^2\DK^2}{\alpha\epsilon})}{\alpha^{5/2}\epsilon^{5/2}}\bigg\}\right)\\&=
			O\left(\max\bigg\{\frac{\beta G^2\DK\log^{3/2}(\frac{\beta^2\DK^2}{\alpha\epsilon})}{\alpha^{3/2}\epsilon^{3/2}},\frac{\beta G^2\DK^3\log^{5/2}(\frac{\beta^2\DK^2}{\alpha\epsilon})}{\alpha^{1/2}\epsilon^{5/2}}\bigg\}\right).
		\end{align*}
		
		Thus, we indeed achieve $\epsilon$ expected approximation error after
		\begin{align*}
			O\left(\beta G^2\max\bigg\{\frac{\DK^2\DO^2}{\epsilon^3},\frac{\DK\log^{3/2}(\frac{\beta^2\DK^2}{\alpha\epsilon})}{\alpha^{3/2}\epsilon^{3/2}},\frac{\DK^3\log^{5/2}(\frac{\beta^2\DK^2}{\alpha\epsilon})}{\alpha^{1/2}\epsilon^{5/2}}\bigg\}\right)
		\end{align*}
		stochastic gradient evaluations.
	\end{proof}
	
	\section{Additional numerical results}\label{sec: apendexpriments}
	\subsection{Hypercube-constrained least-squares}

	\begin{figure}[H]
		\centering
		\begin{minipage}[b]{0.49\textwidth}
			\includegraphics[width=\textwidth]{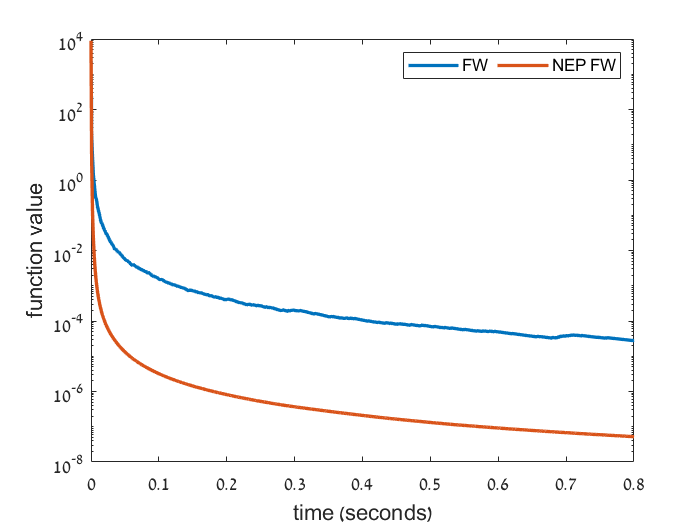}
		\end{minipage}
		\caption{Comparison between the standard Frank-Wolfe method (FW) and our NEP FW variant on the hypercube-constrained least-squares problem. These are the same results as in Figure \ref{fig: cubels} but focusing only on these two variants. }
		\label{fig: cubefaircomp1}
	\end{figure}
	
	\begin{figure}[H]
		\centering
		\begin{minipage}[b]{0.49\textwidth}
			\includegraphics[width=\textwidth]{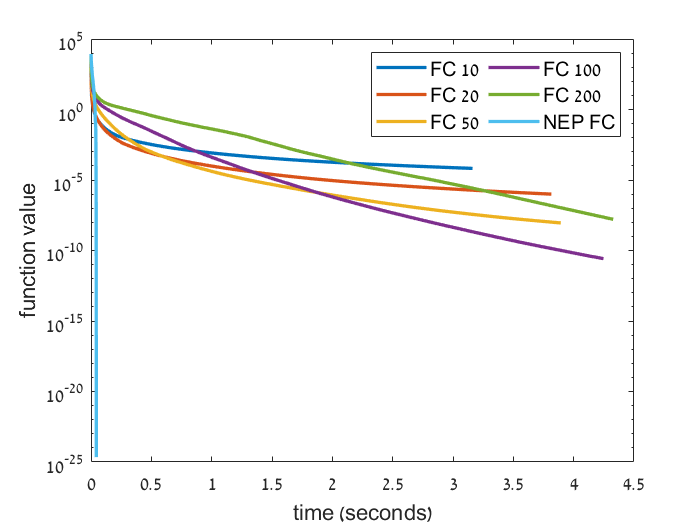}
		\end{minipage}
		\caption{Comparison between our NEP FC variant with 50 iterations of FISTA per iteration and the FC variant with various choices for the number of FISTA iterations (the number of  FISTA iterations is shown besides the algorithm's name) on the hypercube-constrained least-squares problem. The results are the average of 50 i.i.d. runs (where in each run we sample fresh $\A,\x^*$).}
		\label{fig: cubefaircomp2}
	\end{figure}

	\subsection{Video co-localization}
	\begin{figure}[h!]
		\centering
		\begin{minipage}{0.49\textwidth}
			\includegraphics[width=\textwidth]{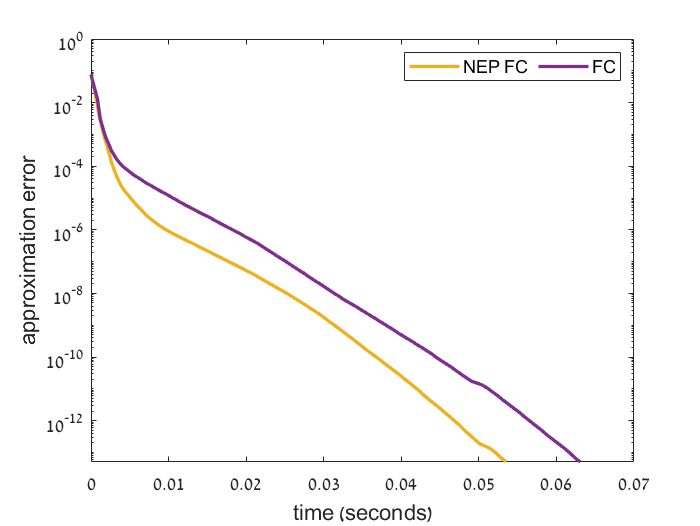}
		\end{minipage}
		\begin{minipage}{0.49\textwidth}
			\includegraphics[width=\textwidth]{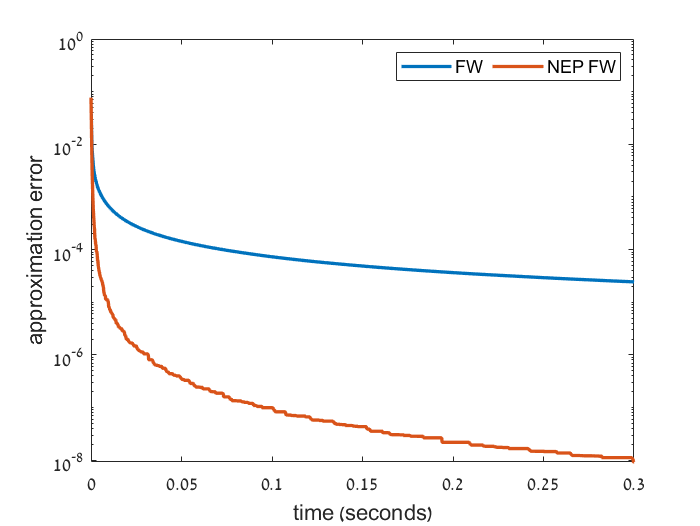}
		\end{minipage}
		\caption{Comparison of Frank-Wolfe variants on the video co-localization problem. The times shown are the averages of 200 runs. These are the same results as in Figure \ref{fig: videoco}, but focusing on FC vs. NEP FC (left panel) and FW vs. NEP FW (right panel). }
		\label{fig: videocoapend}
	\end{figure}
	
	In Figure~\ref{fig: videocoapendgap} we present the performance of the algorithms measured by the duality gap (as was done in \cite{lacoste2015global} and \cite{GM16}). It can be seen that although NEP FC still outperforms all other algorithms, it only gives a slight improvement over FC and that NEP FW only slightly outperforms FW, and is out preformed by AFW and PFW both with respect to time and number of iterations. Here we remind the reader that while we proved the theoretical superiority of our NEP oracle-based algorithms w.r.t. the (primal) approximation error, we did not give any improved bounds w.r.t. the duality gap, and we leave it for future work to settle the question whether or not the use of a NEP oracle could lead to provably faster dual convergence.
	\begin{figure}[h!]
		\centering
		\begin{minipage}[b]{0.49\textwidth}
			\includegraphics[width=\textwidth]{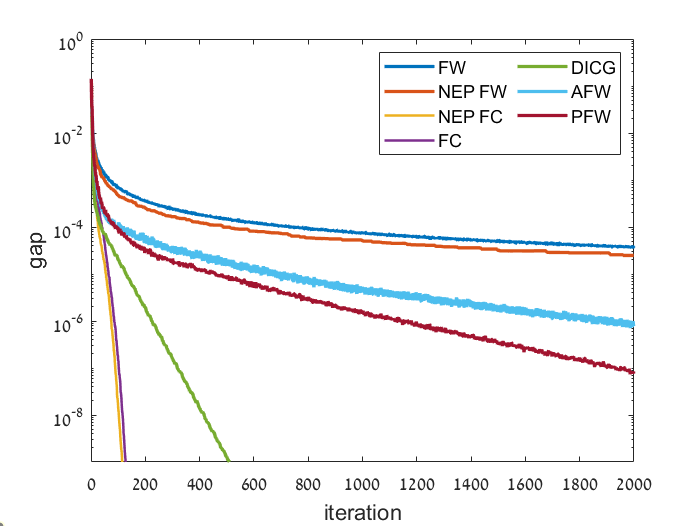}
		\end{minipage}
		\begin{minipage}[b]{0.49\textwidth}
			\includegraphics[width=\textwidth]{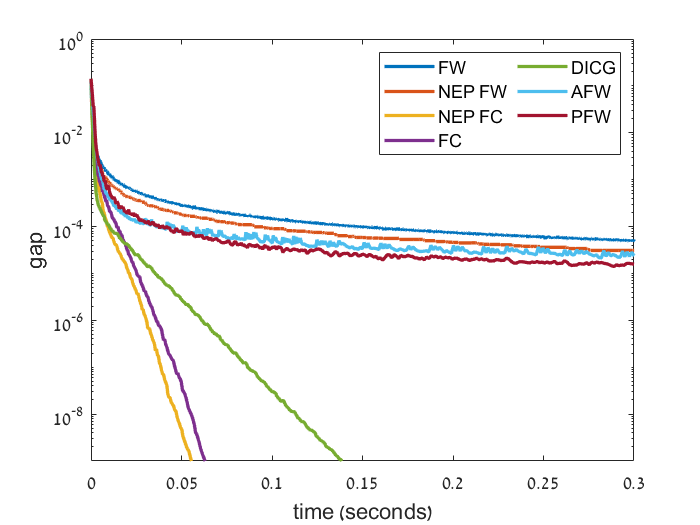}
		\end{minipage}
		\caption{Comparison of Frank-Wolfe variants on the video co-localization problem in terms of the duality gap.  The times shown are the averages of 200 runs.}
		\label{fig: videocoapendgap}
	\end{figure}
\end{document}